\theoremstyle{plain} 
\newtheorem{thm}{\indent\bf Theorem}[section] 
\newtheorem{lmm}[thm]{\indent\bf Lemma}
\newtheorem{crl}[thm]{\indent\bf Corollary}
\newtheorem{prp}[thm]{\indent\bf Proposition}
\theoremstyle{definition} 
\newtheorem{rem}[thm]{\indent\bf Remark}
\newcommand\sectiondot[1]{%
  \expandafter\def\csname @seccntfmt@#1\endcsname##1{%
    \csname the##1\endcsname.\quad
  }
}
\newcommand\sectionpunct[2]{%
  \expandafter\def\csname @seccntfmt@#1\endcsname##1{%
    \csname the##1\endcsname#2%
  }%
}
\renewcommand\@seccntformat[1]{\@ifundefined{@seccntfmt@#1}%
  {\csname the#1\endcsname\quad}% 
  {\csname @seccntfmt@#1\endcsname{#1}}%
}
\renewcommand\sectionpunct[2]{%
  \expandafter\def\csname @seccntfmt@#1\endcsname##1{%
    \csname the##1\endcsname#2 %
  }%
}
\renewcommand\section{\@startsection{section}{1}{\parindent}%
                                     {-3.5ex \@plus -1ex \@minus -.2ex}%
                                     {-2.3ex \@plus -.2ex}%
                                     {\normalfont\normalsize\bfseries}}
\renewcommand\subsection{\@startsection{subsection}{2}{\parindent}%
                                     {-3.25ex\@plus -1ex \@minus -.2ex}%
                                     {-1.5ex \@plus -.2ex}%
                                     {\normalfont\normalsize\bfseries}}
\renewcommand\paragraph{\@startsection{paragraph}{4}{\parindent}%
                                     {-3.25ex\@plus -1ex \@minus -.2ex}%
                                     {-1.5ex \@plus -.2ex}%
                                     {\normalfont\normalsize\bfseries}}
\newcommand{\colR}[1]{#1}
\newcommand{\colC}[1]{#1}
\newcommand{\colRR}[1]{#1}
\newcommand{\colCC}[1]{#1}
\newcommand{\colRRR}[1]{#1}
\newcommand{\colRRRR}[1]{#1}
\newcommand{\textR}[1]{#1}
\newcommand{\BB}[1]{\mathbb{#1}}
\newcommand{\pd}[2]{\frac{\partial#1}{\partial#2}}
\newcommand{\n}{\mathbf{n}}
\renewcommand{\v}{\mathbf{v}}
\newcommand{\wt}[1]{\widetilde{#1}}
\newcommand{\ta}[1]{\widetilde{A_{#1}}}
\newcommand{\tk}[1]{\widetilde{\kappa_{#1}}}
\newcommand{\tv}[1]{\widetilde{\mathbf{v}_{#1}}}
\newcommand{\ma}{\mathcal{A}}
\newcommand{\hs}[1]{\hspace{#1pt}}
\newcommand{\p}{\hspace*{-5pt}\strut^+}
\newcommand{\m}{\hspace*{-5pt}\strut^-}
\title{Height functions on Whitney umbrellas}
\author{
Toshizumi \textsc{FUKUI}
\footnote{Department of Mathematics, Faculty of Science, Saitama
University, 255 Shimo-Okubo, Sakura-ku, Saitama 338-8570, Japan.\newline
e-mail: \texttt{tfukui@rimath.saitama-u.ac.jp}}
~and
Masaru \textsc{HASEGAWA}
\footnote{Department of Mathematics, Faculty of Science, Saitama
University, 255 Shimo-Okubo, Sakura-ku, Saitama 338-8570, Japan.\endgraf
e-mails:
\texttt{mhasegawa@mail.saitama-u.ac.jp}, %}\newline
\texttt{masaruhasegawa@hotmail.co.jp}}}
\date{}
\begin{document}
\maketitle

\footnote{2000 \textit{Mathematics Subject Classification}.
Primary 53A05; Secondary 58K05, 58K35.}
\footnote{\textit{Key words and phrases}.
Whitney umbrella, cross-cap, height function.}

%\tableofcontents      
\begin{abstract}      %optional
 We study the singularities of the members of the family of height
 functions on Whitney umbrellas, which is also known as cross-caps, and
 show that the family of the height functions is a versal unfolding.
 Moreover, we study local intersections of a Whitney umbrella with a
 hyperplane through its singular point. 
\end{abstract}

\section{Introduction}
H. Whitney \cite{Whitney} %has found
\colRR{investigated} singularities which are unavoidable
by small perturbation%.
\colRR{, and found the singularity type, which is}
%These singularities are called Whitney umbrellas or cross-caps. 
\colRR{called Whitney umbrellas (or cross-caps).}
This singularity \colRR{type} is very important\colRR{,} %singularity type
since it is the only singularity of a map of \colRR{a} surface to
3-dimensional Euclidean space 
which is stable under small deformations.
% Therefor, it is a natural problem to study its differential geometry.
% There are some articles concerning the differential geometry of Whitney
% umbrellas \cite{BW1998,FH2012,FN2005,
% GGS2000,GS1986,HHNUY2012,NT2007,Oliver2011,Tari2007,West1995}.
\colRR{It is natural to investigate Whitney umbrellas as a subject of
differential geometry, and there are several articles
\colRRR{\cite{BW1998, FH2012, FN2005, GGS2000, GS1986, HHNUY2012, NT2007,
Oliver2011, ST2012, Tari2007, West1995}} in this
direction. }

In \colRR{the} authors' previous work \cite{FH2012}, we introduce some
differential geometric ingredients (principal curvatures, ridge,
sub-parabolic points, etc.) for Whitney umbrellas, and investigate
singularities and versal unfoldings of distance squared functions on
Whitney umbrellas in terms of these ingredients.
To develop the differential geometric ingredients, we consider the
double oriented blowing-up (\cite[example (a) in p. 221]{Hironaka})
\begin{equation*}
 \label{eq:double_blow-up}
 \wt{\Pi}:\BB{R}\times S^1\to\BB{R}^2,\quad
 (r,\theta)\mapsto(r\cos\theta,r\sin\theta),
\end{equation*}
and a map, which is usually called a blow-up,
\begin{equation*}
 \label{eq:blow-up}
 \Pi:\mathcal{M}\to\BB{R}^2,\quad
 [(r,\theta)]\to(r\cos\theta,r\sin\theta), 
\end{equation*}
where $\mathcal{M}$ denotes the quotient space of $\BB{R}\times S^1$
with identification $(r,\theta)\sim(-r,\pi+\theta)$.
We remark that $\mathcal{M}$ is topologically a M\"obius strip. 
There is a well-defined unit normal vector via $\wt{\Pi}$,
thus we obtain the information on the asymptotic behavior of the
principal curvatures and the principal directions.
Moreover, we also obtain the configuration of the parabolic
%, ridge, and sub-parabolic lines
\colRR{line (or ridge lines, or sub-parabolic lines)} on $\mathcal{M}$
near the exceptional set $E=\Pi^{-1}(0,0)$. 

In this paper, we investigate the relationships between singularities
%of the members of the family
of height functions on Whitney umbrellas and
these differential geometric ingredients for Whitney umbrellas, and the
versality of the family of height functions.
We also study intersections of a Whitney umbrella and hyperplanes
through its singular point.
Height functions are a fundamental tool for the study of the
differential geometry of submanifolds.
Several authors have studied the geometrical properties of submanifolds
in Euclidean space by analyzing the singularities of %the family of
height functions on submanifolds (see, for example,
\cite{BGT1995,MRR1995,P1971}).
% The differential geometry of submanifolds of some ambient space have
% been studied by analogous techniques as in Euclidean space, (see, for
% example, \cite{IPS2000,IS1998}).

The intersection of a surface and a hyperplane is intimately related to
the height function on the surface.
In particular, for a regular surface in $\BB{R}^3$, the contact between
the surface and a hyperplane is measured by the singularity type of the
height function on the surface.
For instance, when the contact of a point $p$ is %the ordinary
\colR{$A_1$} type (%i.e.
\colRR{i.e.,} the height function has an $A_1$ singularity), the
intersection of the surface and the hyperplane at $p$, which is the
tangent plane at $p$, is locally an isolated point or a pair of
transverse curves (see, for example, \cite{BGT1995}).

In Section~2, we %introduce
\colR{recall} the differential geometric ingredients developed in
\cite{FH2012}. 
% Some proofs are omitted in this paper.
% However, if these are needed, please refer to \cite{FH2012}.
In Section~3, we investigate singularities and versal unfoldings of
heigh functions on Whitney umbrella in terms of the
differential geometry which is discussed in Section 2.
Theorem \ref{thm:main1} shows that the relationship of the singularities
of the height functions and the differential geometric ingredients for
Whitney umbrellas.
In Section~4, we study the local intersections of a Whitney umbrella and
hyperplanes through %it
\colR{the} singular point.
% The local intersections are related to the double point locus.
% Theorem \ref{thm:main2} implies that restriction of Whitney umbrellas
% to the zero set of the height functions is generically equivalent to the
% blow-down.
\colR{
To %seek to understand the local
\colRR{detect local singularity types of the} intersections of a surface
with hyperplanes, we must investigate how the zero set of the height
function on the surface is mapped to the surface.
For a regular surface, it is trivial, since the contact of the surface
between hyperplanes is measured by the singularities of the height
function on the surface. 
On the other hand, for a Whitney umbrella, we must investigate how the
zero set of the height function on the Whitney umbrella contacts with
the double point locus $\mathcal{D}$ in the parameter space.
Theorem~\ref{thm:main2} shows that if the hyperplane $\mathcal{P}$
through the Whitney umbrella singularity does not contain the tangent
\colRR{line} to
$\mathcal{D}$ on the surface then the contact of the Whitney umbrella
and $\mathcal{P}$ at the singularity is measured by the type of
singularities of the height function, and that if $\mathcal{P}$ contains
the tangent to $\mathcal{D}$ on the surface then the contact of the
Whitney umbrella and $\mathcal{P}$ is measured by the type of
singularities of the height function and the contact between the zero
set of the height function and $\mathcal{D}$ in parameter space.
Hence, Theorem \ref{thm:main2} is considered to be an analogous theorem
of Montaldi \cite{Montaldi1986} for Whitey umbrellas.}
% \colC{To understand the local intersections of a Whitney umbrella and
% hyperplanes We must investigate how the zero set of the height function
% on a Whitney umbrella is mapped to the surface, though it is trivial in
% the case of a regular surface.
% Theorem~\ref{thm:main2}~(1) shows that, when the hyperplane contains the
% tangent to the double point locus on the surface, the order of the
% contact of the zero set of the height function in the normal direction
% of the hyperplane and the double point locus in the parameter space
% determine the local intersections.
% Furthermore, Theorem~\ref{thm:main2}~(2) is an analogous theorem of
% Montaldi \cite{Montaldi1986} for Whitey umbrellas.}
 
\section{The differential geometry for Whitney umbrella}
In this section, we %introduce
\colR{recall} the differential geometry of Whitney
umbrellas, which is obtained in \cite{FH2012}.

A smooth map germ $g:(\BB{R}^2,{\bf 0})\to(\BB{R}^3,{\bf 0})$ is a {\it
Whitney umbrella} if $g$ is $\ma$-equivalent to the map germ
$(\BB{R}^2,{\bf 0})\to(\BB{R}^3,{\bf 0})$ defined by
$(u,v)\mapsto(u,uv,v^2)$. 
Here, map germs $f_1$, $f_2:(\BB{R}^2,{\bf 0})\to(\BB{R}^3,{\bf 0})$ are
{\it $\ma$-equivalent} if there exist diffeomorphism map germs
$\phi_1:(\BB{R}^2,{\bf 0})\to(\BB{R}^2,{\bf 0})$ and
$\phi_2:(\BB{R}^3,{\bf 0})\to(\BB{R}^3,{\bf 0})$ such that
$f_1\circ\phi_1=\phi_2\circ f_2$.

For a regular surface in $\BB{R}^3$, at any point on the surface we can
take the $z$-axis as the normal direction. After suitable rotation if
necessary, the surface can be locally expressed in Monge form:
\[
 (u,v)\mapsto(u,v,\frac12(k_1u^2+k_2v^2)+O(u,v)^3).
\]
For a Whitney umbrella, we have the similar normal form %as
\colRR{to} Monge form.
\begin{prp}
 \label{prop:normalform}
 Let $g:(\BB{R}^2,{\bf 0})\to(\BB{R}^3,{\bf 0})$ be a
 Whitney umbrella.
 Then there are $T\in \mathrm{SO}(3)$ and a diffeomorphism  
 $\phi:(\BB{R}^2,{\bf 0})\to(\BB{R}^2,{\bf 0})$ so that 
 \begin{equation}
  \label{eq:normal_form}
  T\circ g\circ\phi(u,v)=\biggl(u,\
 uv+\sum_{i=3}^k\frac{b_i}{i!}v^i+O(u,v)^{k+1},\
 \sum_{m=2}^kA_m(u,v)%\frac{a_{ij}}{i!j!}u^i v^j
 +O(u,v)^{k+1}\biggr), 
 \end{equation}
 where %$a_{02}\ne0$.
 \[
  A_m(u,v)=\sum_{i+j=m}\frac{a_{ij}}{i!j!}u^i v^j\quad(a_{02}\ne0).
 \]
\end{prp}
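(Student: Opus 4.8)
The plan is to start from the definition of a Whitney umbrella: there exist diffeomorphisms $\phi_1$ of the source and $\phi_2$ of the target with $g = \phi_2 \circ (u,v)\mapsto(u,uv,v^2) \circ \phi_1$, so up to $\mathcal{A}$-equivalence $g$ has the $2$-jet of the cross-cap. I would first reduce to a convenient preliminary form by a source diffeomorphism and a \emph{linear} change in the target: arrange that $dg_{\mathbf 0}$ has image the $x$-axis (this is the ``double point'' direction, canonically defined as the image of the kernel direction's complement under the singular differential), so that after a rotation $T_0 \in \mathrm{SO}(3)$ we may write $g(u,v) = (u + \cdots,\ *,\ *)$ with the first component having nonzero $u$-derivative; then absorb it by a source diffeomorphism to get the first component exactly $u$. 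At this stage $g = (u,\ g_2(u,v),\ g_3(u,v))$ with $g_2, g_3$ having zero $1$-jet, and the cross-cap condition forces, after possibly another linear target map fixing the $x$-axis and a source change, that $g_2 = uv + (\text{higher order})$ and the quadratic part of $g_3$ is $\frac{a_{02}}{2}v^2 + (\text{terms divisible by } u)$ with $a_{02}\neq 0$.

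The heart of the argument is then to successively kill the unwanted terms degree by degree using the remaining freedom: source diffeomorphisms of the form $(u,v)\mapsto(u, v + (\text{h.o.t.}))$ together with $(u,v)\mapsto(u + (\text{h.o.t. divisible by }v\text{ or of order}\ge2), \ldots)$ preserving the first-component normalization, and target diffeomorphisms of the form $(x,y,z)\mapsto(x, y + (\text{h.o.t.}), z + (\text{h.o.t.}))$. The key observation is that in the second component, any monomial $u^i v^j$ with $i \ge 1$ can be removed: a term $c\, u^i v^j$ in $g_2$ is of the form $u \cdot (c\, u^{i-1}v^j)$, and since $g_1 = u$ and $g_2 \approx uv$, such terms can be absorbed either by a target change $y \mapsto y - c\, x^{i-1}(\cdots)$ using $x = u$ and $y/x \approx v$, or by adjusting $v$ in the source. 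This leaves in the second component only pure powers of $v$, i.e.\ $uv + \sum_{i\ge3} \frac{b_i}{i!}v^i$, which is exactly the asserted shape (the $i=2$ term $v^2$ can be killed by $v \mapsto v + (\text{const})v$? no — rather by a target shear $z$-independent move, or is simply absorbed into $uv$ after a source change $u \mapsto u - (\text{const})v$; this must be checked). In the third component, having fixed the first two, one uses the remaining target freedom $z \mapsto z + (\text{function of } x \text{ and } y)$ to remove nothing essential but to clean up, and in fact no further normalization of $A_m$ is claimed — the proposition only asserts that $g_3 = \sum_{m=2}^k A_m(u,v) + O(u,v)^{k+1}$ with $a_{02}\neq0$, which is automatic once $g_3$ is a general function with the right nonzero quadratic coefficient.

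I would organize the removal inductively: assuming the form holds modulo terms of order $N$, show that an order-$N$ source change of the type above plus an order-$N$ target change of the type above can be chosen to eliminate all order-$N$ terms violating the form in the first two components, checking that such changes do not disturb the lower-order normalization already achieved (they won't, because they are tangent to the identity to order $N-1$). One then appeals to the fact that the finitely many steps up to order $k$ compose to genuine diffeomorphism germs $\phi$ and $T$ — and here one must be slightly careful that $T$ can be taken in $\mathrm{SO}(3)$ rather than just $\mathrm{GL}(3)$: the nontrivial part is the initial rotation aligning the axes; all subsequent target changes are unipotent (tangent to the identity), hence orientation-preserving and homotopic to the identity, but they are not linear. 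The statement as written says $T \in \mathrm{SO}(3)$ \emph{and} a diffeomorphism $\phi$ of the source — so presumably the \emph{target} normalization beyond the rotation is folded into $\phi$ via the $\mathcal{A}$-equivalence, or the claim is that only a rotation of the target is needed with all cleanup done in the source; I would follow whichever bookkeeping the paper adopts, most likely doing all higher-order normalization through the source diffeomorphism $\phi$ alone after the single rotation $T$, which is possible because the target map $(u,v)\mapsto(u,uv,v^2)$ is finitely $\mathcal{A}$-determined and the relevant $\mathcal{A}$-tangent space computations show the needed adjustments lie in the image of the source action.

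The main obstacle I anticipate is precisely this last bookkeeping point — verifying that one does not need target diffeomorphisms beyond $\mathrm{SO}(3)$, i.e.\ that every higher-order correction can be realized by a source diffeomorphism. This requires understanding the $\mathcal{A}$-tangent space of the cross-cap and checking that the ``target directions'' needed are already in the span of the source vector fields pushed forward by $g$; equivalently, that the relevant terms are in the ideal generated by the components of $g$ in a suitable sense. The degree-by-degree computation is routine once this structural fact is in hand, so I would state and prove that lemma first.
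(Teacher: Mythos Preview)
The paper does not actually prove this proposition: immediately after the statement it says ``This result was first obtained in \cite{West1995}. A proof can also be found in \cite{FH2012}.'' So there is no in-paper argument to compare against; the proposition is imported from the authors' earlier work.

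That said, your plan is essentially the right one and would succeed, though you make it harder on yourself than necessary. The clean sequence is: (i) choose source coordinates so that $\ker dg_{\mathbf 0}$ is the $v$-axis; (ii) rotate the target so that $g_u(\mathbf 0)$ lies along the $x$-axis; (iii) replace $u$ by the first component to make it exactly $u$; (iv) use the remaining rotation in the $yz$-plane to send the nonzero vector $(g_{2,vv}(\mathbf 0),g_{3,vv}(\mathbf 0))$ to $(0,a_{02})$---this is what kills the $v^2$ term in the second component and is the point you were unsure about; (v) rescale $v$ and then shift $v\mapsto v+cu$ to make the quadratic part of $g_2$ equal to $uv$; (vi) inductively, for each degree $N\ge 3$, a source change $v\mapsto v-\sum_{i\ge1,\,i+j=N} c_{ij}u^{i-1}v^{j}$ strips all monomials $u^iv^j$ with $i\ge1$ from the degree-$N$ part of $g_2$, since the leading effect on $g_2$ comes from the $uv$ term. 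None of these steps touches the first component, and none reintroduces a $v^2$ in $g_2$. The third component is simply whatever results, with $a_{02}\ne 0$ preserved throughout.

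Your worry about needing the $\mathcal{A}$-tangent space to justify that no higher-order \emph{target} diffeomorphism is required is misplaced: step (vi) shows directly and elementarily that source changes alone suffice once the two rotations (both genuinely in $\mathrm{SO}(3)$) have done their work. So drop the tangent-space lemma and just carry out the explicit induction; the proof is then short and self-contained.
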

This result was first obtained in \cite{West1995}.
%and its proof can also be found in
%\cite{GGS2000}, \cite{FH2012}, and \cite{Oliver2011}.
%Its
\colRR{A} proof can also be found in \cite{FH2012}.
Without loss of generality, we may assume that $a_{02}>0$ (see
\cite[Introduction]{HHNUY2012}).
The form \eqref{eq:normal_form} with such an assumption is called the
{\it normal form of Whitney umbrella}.
% We call the map $g:(\BB{R}^2,{\bf 0})\to(\BB{R}^3,{\bf 0})$ given by
%  \begin{equation*}
%   \label{eq:normal}
%    g(u,v)=\biggl(u,\
%    uv+\sum_{i=2}^k\frac{b_i}{i!}v^i+O(u,v)^{k+1},\
%    \sum_{\ell=2}^k A_\ell(u,v)+O(u,v)^{k+1}\biggr),
%  \end{equation*}
%  where
%  \[
%  A_\ell(u,v)=\sum_{i+j=\ell}\frac{a_{ij}}{i!j!}u^i v^j\quad(a_{02}>0),
%  \]
%  the {\it normal form of Whitney umbrella}.

We suppose that $g:(\BB{R}^2,{\bf 0})\to(\BB{R}^3,{\bf 0})$ is given in the
normal form of Whitney umbrella.
We shall introduce some differential geometric ingredients (the unit
normal vector, principal curvatures, and principal vectors etc.) for
Whitney umbrellas (see \cite{FH2012} for details).

\paragraph{Unit normal vectors.}
We denote the unit normal vector of $g$ by $\n(u,v)$.
The unit normal vector $\wt{\n}=\n\circ\wt{\Pi}$ in the coordinates $(r,\theta)$
is expressed as follows:
\begin{equation*}
 \label{eq:unit_normal}
 \wt{\n}(r,\theta)=\frac{(0,-a_{11}\cos\theta-a_{02}\sin\theta,\cos\theta)}{\ma(\theta)}+O(r),
\end{equation*}
where $\ma(\theta)=\sqrt{\cos^2\theta+(a_{11}\cos\theta+a_{02}\sin\theta)^2}$.

\paragraph{Principal curvatures and principal directions.}
We denote by $\kappa_1$ and  $\kappa_2$ the principal curvatures.
We also denote by $\v_1$ and  $\v_2$ the principal vectors associated with
$\kappa_1$ and  $\kappa_2$, respectively.
The principal curvatures $\tk{i}=\kappa_i\circ\wt{\Pi}$ in the coordinates $(r,\theta)$
are expressed as follows:
\begin{align}
 \label{eq:blow_k1}
 \tk{1}(r,\theta)&=\frac{%(a_{11}\cos\theta+a_{02}\sin\theta)
 (a_{20}\cos^2\theta-a_{02}\sin^2\theta)\sec\theta}{\ma(\theta)}+O(r),\\
 \label{eq:blow_k2}
 \tk{2}(r,\theta)&=\frac1{r^2}\left(\frac{a_{02}\cos\theta}{\ma(\theta)^3}+O(r)\right).
\end{align}
The (unit) principal vectors $\tv{i}$ in the coordinates $(r,\theta)$
are expressed as follows:
\begin{align*}
% \label{eq:blow_v1}
 \tv{1}(r,\theta)&=(\sec\theta+O(r))\pd{}{r}
 +\left(\frac{-2\wt{A_{3v}}(\theta)+b_3\wt{A_{2v}}(\theta)\sin\theta\tan\theta}{2a_{02}}+O(r)\right)\pd{}{\theta},\\
% \label{eq:blow_v2}
 \tv{2}(r,\theta)&=\frac1{r^2}\left[\left(\frac{\sin\theta}{\ma(\theta)}r+O(r^2)\right)\pd{}{r}
 +\left(\frac{\cos\theta}{\ma(\theta)}+O(r)\right)\pd{}{\theta}\right].
\end{align*}
Here, $\wt{A_{2v}}(\theta)=A_{2v}|_{(u,v)=(\cos\theta,\sin\theta)}$,
$\wt{A_{3v}}(\theta)=A_{3v}|_{(u,v)=(\cos\theta,\sin\theta)}$ and so on.

\paragraph{Gaussian curvature.}
\label{sec:Gaussian}
Since the Gaussian curvature $K$ is the product of the principal
curvatures, it does not depend on the choice of the unit normal vector. 
From \eqref{eq:blow_k1} and \eqref{eq:blow_k2}, 
the Gaussian curvature $\wt{K}$ of $g$ in the coordinates $(r,\theta)$
is expressed as follows:
\begin{align}
 \label{eq:Gaussian}
 \wt{K}(r,\theta)=K\circ\Pi(r,\theta)
 =\frac1{r^2}\left(\frac{a_{02}(a_{20}\cos^2\theta-a_{02}\sin^2\theta)}{{\mathcal{A}(\theta)}^4}+O(r)\right).
\end{align}
By this expression, we say that a point $(0,\theta_0)$ on $\mathcal{M}$
is {\it elliptic}, {\it hyperbolic}, or {\it parabolic point over
Whitney umbrella} if %$a_{02}(a_{20}\cos^2\theta_0-a_{02}\sin^2\theta_0)$
$r^2\wt{K}(0,\theta_0)$ is positive, negative, or
zero, respectively.
We often omit the phrase ``over Whitney umbrella'' if
no confusion is possible from the context.

\paragraph{Parabolic lines.}
It is shown in \cite{West1995} that Whitney umbrellas are generically
classified into two types:
the {\it elliptic Whitney umbrella}, whose parabolic line, in the parameter
space, has an $A_1^-$ singularity (locally two transversally intersecting
curves); and the {\it hyperbolic Whitney umbrella}, whose parabolic line
has an $A_1^+$ singularity (locally an isolated point).
At the transition between two types, there is a Whitney umbrella whose
parabolic line has an $A_2$ singularity (locally a cusp).
Such a Whitney umbrella is called a {\it parabolic Whitney umbrella}
(see \cite{NT2007,Oliver2011}).
% Here, we say that a function germ $f:(\BB{R}^2,{\bf 0})\to\BB{R}$ has an
% $A_k^\pm$ singularitye at ${\bf 0}$ if $f$ is $\mathcal{R}$-equivalent
% to $(u,v)\mapsto u^{k+1}\pm v^2$. 
 The parabolic line of $g$ in $\mathcal{M}$ is expressed as follows:
\label{sec:parabolic}
\begin{align*}
 \begin{split}
  \label{eq:parabolic_line}
  0&=\text{\small{$
  a_{02}(a_{20}\cos^2\theta-a_{02}\sin^2\theta)
  +\Bigl[(a_{30}a_{02}+a_{12}a_{20})\cos^3\theta$}}\\
  &\text{\small{$
  \quad+(2a_{21}a_{02}+a_{03}a_{20}-a_{20}a_{11}b_3)\cos^2\theta\sin\theta
  -a_{02}(a_{03}-a_{11}b_3)\sin^3\theta\Bigr]r+O(r^2)$}}.
 \end{split}
\end{align*}
% From \eqref{eq:parabolic_line}, the criteria for the type of Whitney
% umbrellas as follows.
From this form, we obtain the following proposition providing criteria
for the type of Whitney umbrella.
% The following proposition provides criteria for the type of Whitney
% umbrella.
\begin{prp}
 \label{prop:type_of_Whitney_umbrella}
\begin{enumerate}
 \vspace{-6pt}\setlength{\parskip}{0cm}\setlength{\itemsep}{0cm} 
 \item The map $g$ is an elliptic $($resp. hyperbolic$)$ Whitney
       umbrella if and only if $a_{20}>0$ $($resp. $<0)$.
%  \item The map $g$ is a hyperbolic Whitney umbrella if and only if
%        $a_{20}<0$.
 \item The map $g$ is a parabolic Whitney umbrella if and only if
       $a_{20}=0$ and $a_{30}\ne0$.% , that is equivalent to a point
%        $(r,\theta)=(0,0)$ is the only one parabolic point, and not a
%        ridge point relative to $\tv{1}$ over Whitney umbrella.
\end{enumerate}
\end{prp}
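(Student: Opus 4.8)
The plan is to reduce the whole statement to the classification of a single function germ on the parameter plane, namely the defining function of the parabolic line. Starting from the expression for the parabolic line in $\mathcal{M}$ recorded above, I would multiply that equation by $r^2$: since $r^2(a_{20}\cos^2\theta-a_{02}\sin^2\theta)=a_{20}u^2-a_{02}v^2$ and the bracketed homogeneous cubic in $(\cos\theta,\sin\theta)$ becomes, after multiplication by $r^3$, a homogeneous cubic $C(u,v)$ in $(u,v)$, this clears the blow-up and produces a smooth germ
\[
\delta(u,v)=a_{02}(a_{20}u^2-a_{02}v^2)+C(u,v)+O(u,v)^4
\]
on $(\BB{R}^2,\mathbf{0})$ whose zero set is exactly the parabolic line (here one uses $r^2>0$ away from the origin; smoothness of $\delta$ is clear from the setup in Section~2). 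By definition, the type of the parabolic line at the Whitney umbrella singularity is the singularity type of $\delta$ at $\mathbf{0}$, so it remains to classify $\delta$.

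For (1), $\mathbf{0}$ is a critical point of $\delta$ with Hessian $\mathrm{diag}(2a_{02}a_{20},\,-2a_{02}^2)$, of determinant $-4a_{02}^3a_{20}$. Using $a_{02}>0$: if $a_{20}>0$ this Hessian is nondegenerate and indefinite, so by the Morse lemma $\delta$ has an $A_1^-$ singularity and $g$ is an elliptic Whitney umbrella; if $a_{20}<0$ the Hessian is negative definite, so $\delta$ has an $A_1^+$ singularity and $g$ is hyperbolic. Equivalently, and more geometrically: when $a_{20}>0$ the leading term of the parabolic equation vanishes on $E$ exactly at the two points of $\mathcal{M}$ with $\tan^2\theta=a_{20}/a_{02}$, where its $\theta$-derivative $-2a_{02}(a_{20}+a_{02})\sin\theta\cos\theta$ is nonzero, so the implicit function theorem gives two smooth branches of the parabolic line crossing $E$ transversally; when $a_{20}<0$ the leading term has no zero on $E$, so near the singularity the parabolic line reduces to the single point $\mathbf{0}$. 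Since $a_{20}>0$ and $a_{20}<0$ are mutually exclusive and, by part (2) below, the remaining possibility produces neither type, the two equivalences in (1) follow.

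For (2), I would set $a_{20}=0$. Then the $2$-jet of $\delta$ degenerates to $-a_{02}^3v^2$ (rank one), and reading the bracket with $a_{20}=0$ gives
\[
j^3\delta=-a_{02}^3v^2+a_{02}\bigl(a_{30}u^3+2a_{21}u^2v-(a_{03}-a_{11}b_3)v^3\bigr).
\]
Applying the splitting lemma, a coordinate change of the form $\tilde v=v-(a_{21}/a_{02}^2)u^2+\cdots$ absorbs all terms involving $v$ into the nondegenerate square $-a_{02}^3\tilde v^2$, leaving $\delta\sim -a_{02}^3\tilde v^2+h(u)$ with $h(u)=a_{02}a_{30}u^3+O(u^4)$. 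Hence $\delta$ has an $A_2$ singularity precisely when $a_{30}\ne0$ (and an $A_{\ge3}$ singularity when $a_{30}=0$); combined with (1), which rules out $A_2$ for $a_{20}\ne0$, this yields that $g$ is a parabolic Whitney umbrella if and only if $a_{20}=0$ and $a_{30}\ne0$.

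None of these steps is deep: clearing the blow-up and reading off the $2$- and $3$-jets of $\delta$ are routine. The one place that demands care is the splitting-lemma step in (2) — one must check that the $v^3$ term and all higher $v$-dependent terms genuinely can be removed by the change of coordinates, so that the transverse Morse coordinate $\tilde v$ is clean and the residual $h(u)$ really does have leading term $a_{02}a_{30}u^3$; this is exactly where the condition $a_{30}\ne0$ gets pinned down.
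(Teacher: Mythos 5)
Your argument is correct, and in its ``equivalently, more geometrically'' form for part (1) it is exactly what the paper intends: the proposition is presented as an immediate consequence of the displayed equation of the parabolic line on $\mathcal{M}$, read off by locating the zeros of the leading coefficient $a_{02}(a_{20}\cos^2\theta-a_{02}\sin^2\theta)$ on the exceptional set $E$ and checking transversality there --- two transverse crossings of $E$ for $a_{20}>0$ (hence $A_1^-$ downstairs), no crossing for $a_{20}<0$ (hence an isolated point, $A_1^+$), and a single tangential crossing blowing down to a cusp when $a_{20}=0$ and $a_{30}\ne0$. Your primary route --- clearing the blow-up to the smooth germ $\delta$ in the parameter plane and classifying it by the Morse and splitting lemmas --- is an equivalent repackaging of the same leading-term data; it has the mild advantage of making the ``only if'' directions transparent (a rank-one $2$-jet cannot give $A_1^\pm$, a nonzero cubic residue cannot give $A_{\ge 3}$), which the blow-up picture leaves implicit. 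One cosmetic slip: with $a_{20}=0$ the quadratic part of $\delta=a_{02}(a_{20}u^2-a_{02}v^2)+\cdots$ is $-a_{02}^2v^2$, not $-a_{02}^3v^2$; since $a_{02}>0$ this affects nothing, and the residual cubic $a_{02}a_{30}u^3$ after completing the square is as you state.
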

% \begin{proof}[\indent{\sc Proof} \upshape(Proof of Proposition~\ref{prop:parabolic_line})]
% \end{proof}
\begin{figure}[ht]
 \begin{minipage}[t]{0.33\textwidth}
  \centering
  \includegraphics[width=0.7\textwidth]{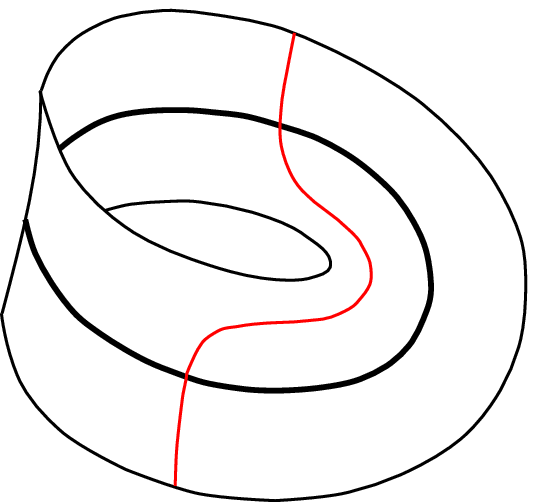}
 \end{minipage}\hfill
 \begin{minipage}[t]{0.33\textwidth}
  \centering
  \includegraphics[width=0.7\textwidth]{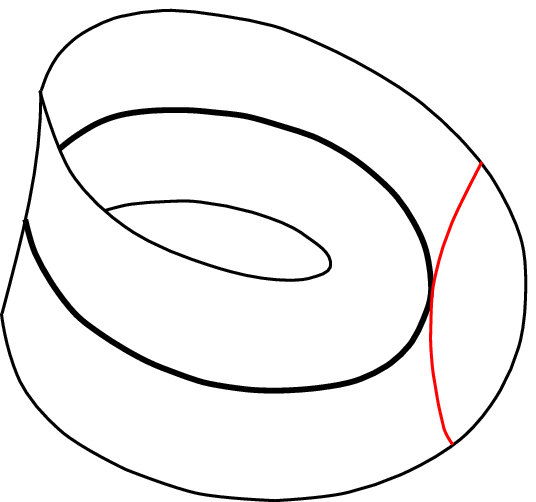}
 \end{minipage}\hfill
 \begin{minipage}[t]{0.33\textwidth}
  \centering
  \includegraphics[width=0.7\textwidth]{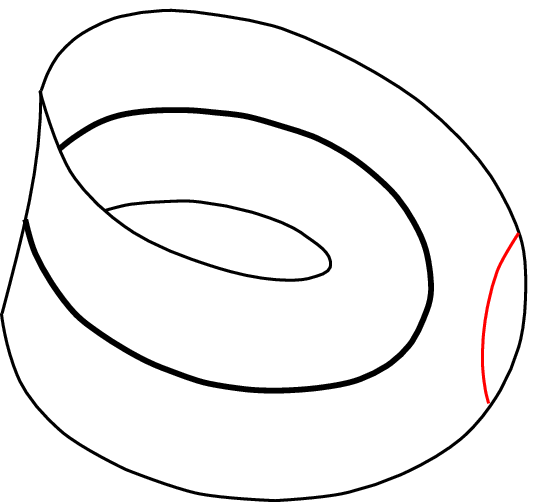}
 \end{minipage}
 \caption{Parabolic lines on $\mathcal{M}$: elliptic Whitney umbrella left,
 parabolic Whitney umbrella center, hyperbolic Whitney umbrella right.}
\end{figure}
\paragraph{Ridge and sub-parabolic points.}
Ridge points of regular surfaces in $\BB{R}^3$ were first studied by
Porteous \cite{P1971} in terms of singularities of distance squared
functions. 
The ridge points are points where one principal curvature has an
extremal value along lines of the same principal curvature.
The ridge points are also correspond to singularities on the focal
surface.

The directional derivative of $\tk{1}$ in the direction $\tv{1}$ is
expressed as follows: 
\begin{equation*}
% \label{eq:ridge}
 \tv{1}\tk{1}(r,\theta)
% =\frac{\Gamma_3(\theta)\sec^3\theta}
 =\frac{\left(6\ta{3}(\theta)\cos\theta-b_3\ta{2v}(\theta)\sin^3\theta\right)\sec^3\theta}{\ma(\theta)}+O(r),\\
%  \nonumber
%  \tv{1}\tk{2}(r,\theta)&=\frac1{r^3}\left(-\frac{2a_{02}}{\ma(\theta)^3}+O(r)\right),\\
%  \nonumber
%  \tv{2}\tk{2}(r,\theta)&=\frac1{r^4}\left(-\frac{3a_{02}(a_{11}\cos\theta+a_{02}\sin\theta)\cos\theta}{\ma(\theta)^6}+O(r)\right).
\end{equation*}
%  Here,
%  \begin{align*}
%   \Delta(\theta)&=(a_{30}\cos^3\theta+3a_{21}\cos^2\theta\sin\theta+3a_{12}\cos\theta\sin^2\theta+a_{03}\sin^3\theta)\cos\theta\\
%   &\quad-b_3(a_{11}\cos\theta+a_{02}\sin\theta)\sin^3\theta,\\
%   \Delta^*(\theta)&=(a_{20}\cos^2\theta+2a_{11}\cos\theta\sin\theta+a_{02}\sin^2\theta)(a_{11}\cos\theta+a_{02}\sin\theta)+2\cos^2\theta\sin\theta.
%  \end{align*}
We say that a point $(r_0,\theta_0)$ is a {\it ridge point relative to
$\tv{1}$} if $\tv{1}\tk{1}(r_0,\theta_0)=0$.
In particular, if the ridge point $(r_0,\theta_0)$ is in the exceptional
set $E=\Pi^{-1}(0,0)$ (that is, $r_0=0$), the point is said to be the
{\it ridge point over Whitney umbrella}.
%The set of ridge points is called the {\it ridge line}.
Moreover, a point $(r_0,\theta_0)$ is a {\it $n$-th order ridge point
relative to $\tv{1}$} if ${\tv{1}}^{(k)}\tk{1}(r_0,\theta_0)=0$ $(1\leq
k\leq n)$ and ${\tv{1}}^{(n+1)}\tk{1}(r_0,\theta_0)\ne0$, where
${\tv{1}}^{(k)}\tk{1}$ denotes the $k$-times directional derivative of
$\tk{1}$ in $\tv{1}$.
The twice directional derivative of $\tk{1}$ in $\tv{1}$ is expressed as follows:
\begin{align*}
 {\tv{1}}^2\tk{1}(r,\theta)
 =\frac{\left(\ta{3vv}(\theta)\cos\theta-b_3\ta{2v}\right)\tv{1}\tk{1}(0,\theta)
 \sec^2\theta}
 {a_{02}}
 +\frac{\Gamma(\theta)\sec^5\theta}{a_{02}\ma(\theta)}+O(r)
\end{align*}
where
\begin{align*}
% \label{eq:gamma}
 \Gamma(\theta)&
 =\text{\small{$
 24a_{02}\ta{4}(\theta)\cos^2\theta
 -12a_{02}\left({\ta{2}(\theta)}^2+\cos^2\theta\sin^2\theta\right)(a_{20}\cos^2\theta-a_{02}\sin^2\theta)$}}\\\nonumber
 &\quad\text{\small{$
 -3\left(2\ta{3v}(\theta)\cos\theta-b_3\ta{2v}(\theta)\sin^2\theta\right)^2
 -a_{02}b_4\ta{2v}(\theta)\cos\theta\sin^4\theta$}}.\\\nonumber
\end{align*}

Sub-parabolic points of regular surfaces in $\BB{R}^3$ were first
studied in terms of folding maps in \cite{BW1991} and
\cite{Wilkinson1991}.
The sub-parabolic points are points where one principal curvature has an
extremal value along lines of the other principal curvature.
The sub-parabolic points are also correspond to parabolic points on the
focal surface.

The directional derivative of $\tk{1}$ in the direction $\tv{2}$ is
expressed as follows:  
\begin{equation}
 \label{eq:sub-parabolic}
 \tv{2}\tk{1}(r,\theta)
 =\frac1{r^2}\left(-\frac{2a_{02}\left(\ta{2}(\theta)\ta{2v}(\theta)+\cos^2\theta\sin\theta\right)}{\ma(\theta)^4}+O(r)\right).%,\\
\end{equation}
We say that a point $(r_0,\theta_0)$ is a {\it sub-parabolic point relative
to $\tv{2}$} if $r^2\tv{2}\tk{1}(r_0,\theta_0)=0$.
In particular, if the sub-parabolic point $(r_0,\theta_0)$ is in the
exceptional set $E$, %$X=\Pi^{-1}(0,0)$ (that is, $r_0=0$),
the point is said to be the
{\it sub-parabolic point over Whitney umbrella}.
\begin{rem}
 The map $g$ is a parabolic Whitney umbrella if and only if 
%  $(r,\theta)=(0,0)$ is a parabolic point and not a ridge point relative
%  to $\tv{1}$ over Whitney umbrella.
 $g$ has a unique parabolic point $(r,\theta)=(0,0)$ which is not a
 ridge point over Whitney umbrella.
 In this case, $(0,0)$ is a sub-parabolic point relative to $\tv{2}$
 over Whitney umbrella (cf.~Proposition~\ref{prop:type_of_Whitney_umbrella}). 
\end{rem}

\paragraph{Focal conics.}
\colR{In \cite{FH2012},} we introduce the focal conic as the counterpart of the focal point.
We define a family of distance squared functions $D:(\BB{R}^2,{\bf 0})\times
\BB{R}^3\to\BB{R}$ on a Whitney umbrella $g$ by
$D(u,v,x,y,z)=\|(x,y,z)-g(u,v)\|^2$. 
It is shown in \cite%[Propsition~3.2]
{FH2012} that the function $D$ has an $A_k$
singularity $(k\geq 2)$ at $(0,0)$ for $(x,y,z)$ if and only if
$(x,y,z)$ is on a conic in the normal plane.
Such a conic is called a {\it focal conic}.
The focal conic of the normal form of Whitney umbrella is given by the
equation
\begin{equation*}
 y^2+2a_{11}y z-(a_{20}a_{02}-{a_{11}}^2)z^2+a_{02}z=0.
\end{equation*}
This implies that the focal conic is an ellipse, hyperbola, or parabola
if and only if $a_{20}<0$, $a_{20}>0$, or $a_{20}=0$, respectively.
Hence, we obtain the following criteria for the (generic) type of
Whitney umbrellas as corollary of Proposition
\ref{prop:type_of_Whitney_umbrella}. 
\begin{crl}
 The map $g$ is an elliptic $($resp. hyperbolic$)$ Whitney umbrella if and
 only if its focal conic is a hyperbola $($resp. ellipse$)$.
\end{crl}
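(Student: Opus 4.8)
The plan is to read off the projective type of the focal conic directly from its defining equation, and then to match it against the algebraic criterion in Proposition~\ref{prop:type_of_Whitney_umbrella}. Recall that the focal conic of the normal form of $g$ is the plane curve
\begin{equation*}
 y^2+2a_{11}y z-(a_{20}a_{02}-{a_{11}}^2)z^2+a_{02}z=0
\end{equation*}
in the $(y,z)$-plane. First I would check that this conic is non-degenerate: its associated $3\times 3$ symmetric matrix has determinant $-a_{02}^2/4$, which is nonzero since $a_{02}>0$ in the normal form; hence the focal conic is a genuine ellipse, hyperbola, or parabola, and no degenerate case occurs.

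Next I would examine the quadratic part $y^2+2a_{11}yz-(a_{20}a_{02}-a_{11}^2)z^2$, whose discriminant is $(2a_{11})^2-4\bigl(-(a_{20}a_{02}-a_{11}^2)\bigr)=4a_{20}a_{02}$. Because $a_{02}>0$, the sign of this discriminant coincides with that of $a_{20}$, so the focal conic is a hyperbola, an ellipse, or a parabola according as $a_{20}>0$, $a_{20}<0$, or $a_{20}=0$, respectively; this is precisely the observation recorded just before the statement.

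Finally I would invoke Proposition~\ref{prop:type_of_Whitney_umbrella}(1), which asserts that $g$ is an elliptic (resp.\ hyperbolic) Whitney umbrella if and only if $a_{20}>0$ (resp.\ $a_{20}<0$). Chaining the two equivalences gives that $g$ is elliptic $\iff a_{20}>0 \iff$ the focal conic is a hyperbola, and $g$ is hyperbolic $\iff a_{20}<0 \iff$ the focal conic is an ellipse, which is the claim. (In the same spirit, $a_{20}=0$ together with $a_{30}\ne0$ singles out the parabolic Whitney umbrella and the parabola case, by part~(2) of the proposition.) I do not expect any genuine obstacle: the only computation involved is the discriminant of a binary quadratic form, so once the focal conic equation from \cite{FH2012} is at hand the corollary is essentially a one-line bookkeeping argument.
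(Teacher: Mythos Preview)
Your proposal is correct and follows exactly the route the paper takes: the paper states, just before the corollary, that the focal conic is an ellipse, hyperbola, or parabola according as $a_{20}<0$, $a_{20}>0$, or $a_{20}=0$, and then invokes Proposition~\ref{prop:type_of_Whitney_umbrella}. You have simply made explicit the discriminant computation and the non-degeneracy check that the paper leaves implicit.
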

\begin{figure}[h]
 \begin{minipage}[t]{0.33\textwidth}
  \centering
  \includegraphics[width=0.6\textwidth]{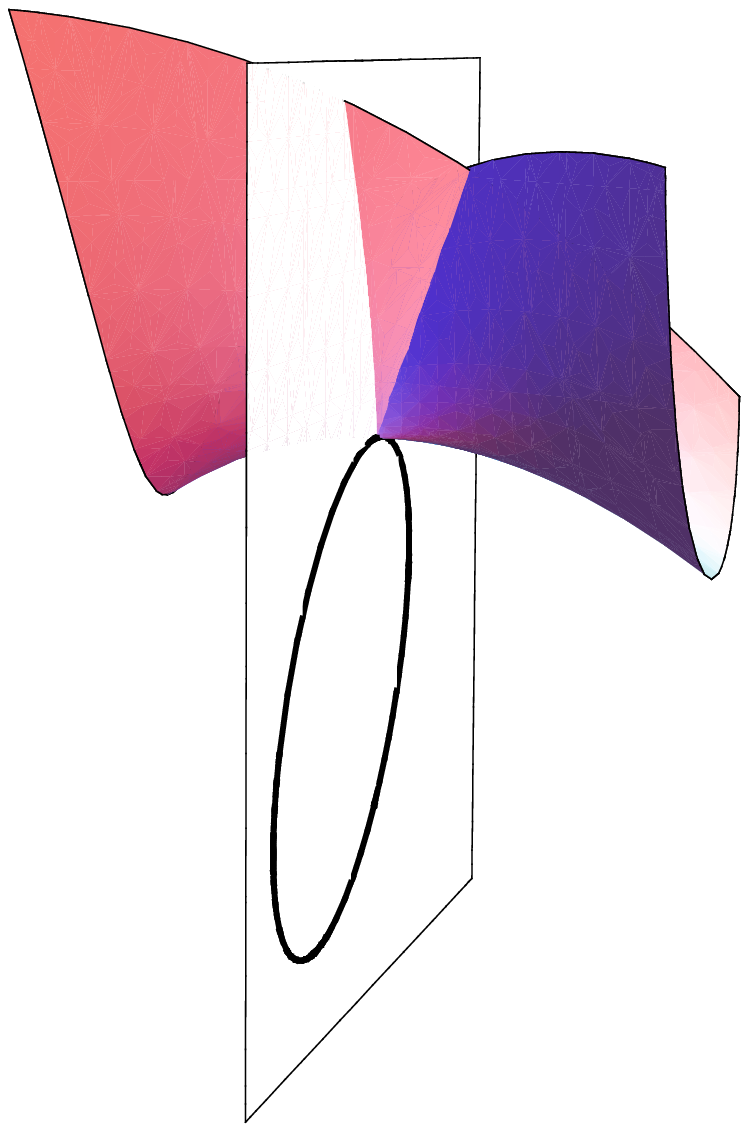}
 \end{minipage}\hfill
 \begin{minipage}[t]{0.33\textwidth}
  \centering
  \includegraphics[width=0.6\textwidth]{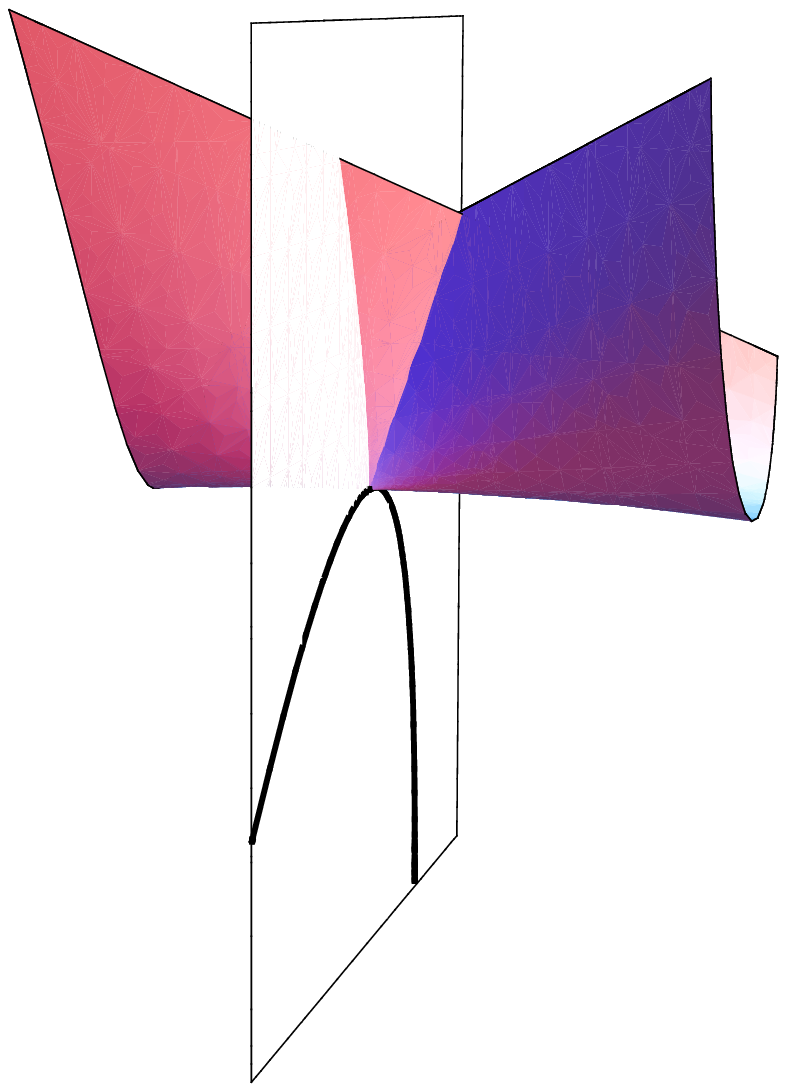}
 \end{minipage}\hfill
 \begin{minipage}[t]{0.33\textwidth}
  \centering
  \includegraphics[width=0.55\textwidth]{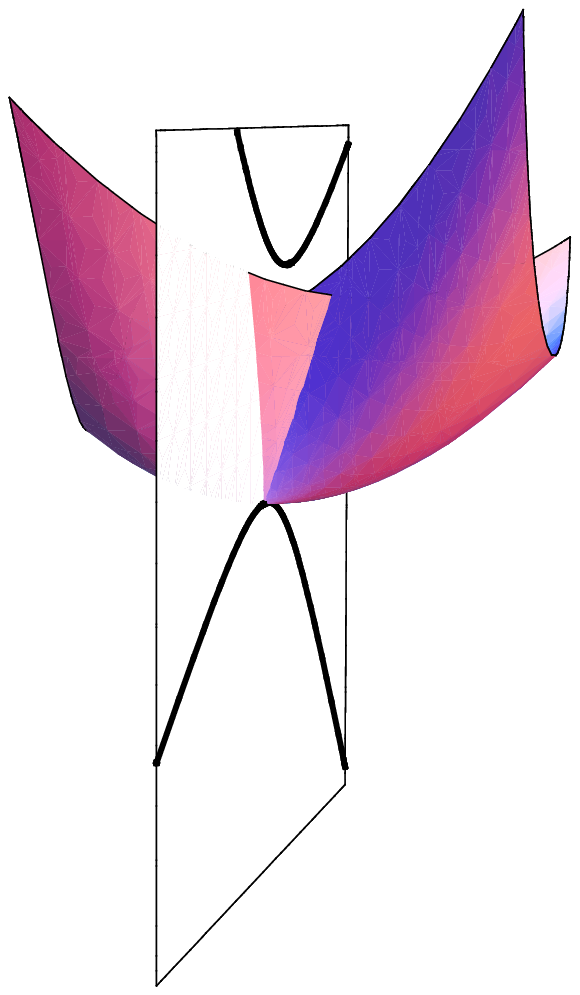}
 \end{minipage}
 \caption{The three types of focal conics: focal ellipse left, focal parabola center,
 focal hyperbola right. }
 \label{fig:focal_conic}
\end{figure}

\section{Height functions on a Whitney umbrella}
\label{sec:height}
 In this section, we investigate singularities and
 $\mathcal{R}^+$-versal unfoldings of height functions on Whitney
 umbrellas.

 Firstly, we recall the definition of unfoldings and %it
 \colRR{its} $\mathcal{R}^+$-versality.
 See \colRR{\cite[Section~8 and 19]{Arnold1}} for details. See also
 \colCC{\cite[Section~3]{Wall1}}. 
 Let $f:(\BB{R}^n,{\bf 0})\to(\BB{R},{\bf 0})$ be a smooth function germ. We
 say that a smooth function germ $F:(\BB{R}^n\times\BB{R}^r,{\bf
 0})\to(\BB{R},{\bf 0})$ is an {\it $r$-parameter unfolding} of $f$ if
 $F|\BB{R}^n\times\{{\bf 0}\}=f$.
 Let $F:(\BB{R}^n\times\BB{R}^r,{\bf
 0})\to(\BB{R},{\bf 0})$ be an unfolding of $f$. The unfolding $F$ is an
 {\it $\mathcal{R}^+$-versal unfolding} of $f$ if for every unfolding
 $G:(\BB{R}^n\times\BB{R}^s,{\bf 0})\to(\BB{R},{\bf 0})$ of $f$ there
 exist a smooth map germ $\Phi:(\BB{R}^n\times\BB{R}^s,{\bf
 0})\to(\BB{R},{\bf 0})$ with $\Phi({\bf u},{\bf 0})={\bf u}$, a smooth
 map germ $\varphi:(\BB{R}^s,{\bf 0})\to(\BB{R}^r,{\bf 0})$, and a
 smooth function germ $\lambda:(\BB{R}^s,{\bf 0})\to(\BB{R},{\bf 0})$
 such that $G({\bf u},{\bf x})=F(\Phi({\bf u},{\bf x}),\varphi({\bf
 x}))+\lambda({\bf x})$, where ${\bf u}\in\BB{R}^n$ and ${\bf x}\in
 \BB{R}^s$.
%
%  See \cite{Arnold1} for the definition of versal unfoldings.
%  See also \cite{Wall1}.
 The set of smooth function germs $(\BB{R}^n,{\bf 0})\to\BB{R}$ is
 denoted by $\mathcal{E}_n$.
 The set $\mathcal{E}_n$ is the local ring with the unique maximal ideal 
 $\mathcal{M}_n=\{f\in\mathcal{E}_n\,;\,f({\bf 0})=0\}$.
 A smooth function germ $f$ is said to be {\it right-$k$-determined} if
 all function germs with a given $k$-jet are right-equivalent. 
 If $f\in\mathcal{M}_n$ is right-$k$-determined, then an
 unfolding $F:(\BB{R}^n\times\BB{R}^r,{\bf 0})\to(\BB{R},{\bf 0})$ of
 $f$ is an $\mathcal{R}^+$-versal unfolding if and only if
 \begin{equation*}
  \label{eq:Rver}
  \mathcal E_n=
  \left<\pd{f}{u_1},\ldots,\pd{f}{u_n}\right>_{\mathcal E_n}
  +
  \left<
  \left.\pd{F}{x_1}\right|_{\BB{R}^n\times\{{\bf 0}\}},\ldots,
  \left.\pd{F}{x_r}\right|_{\BB{R}^n\times\{{\bf 0}\}}
  \right>_{\BB{R}}+\langle 1\rangle_{\BB{R}}
  +\langle u,v\rangle_{\mathcal{E}_n}^{k+1}
 \end{equation*}
 (cf.~\cite[p.~149]{Mar1}).
 
Let $g:(\BB{R}^2,{\bf 0})\to(\BB{R}^3,{\bf 0})$ be a smooth map which
defines a surface in $\BB{R}^3$.
We set a family of functions
\begin{equation*}
 \label{eq:height}
 H:(\BB{R}^2\times S^2,({\bf 0},{\bf w}_0))\to \BB{R}
\end{equation*}
by $H(u,v,{\bf w})=\langle g(u,v), {\bf w}\rangle$, where
$\langle\cdot,\cdot\rangle$ is the %canonical
\colRR{Euclidean inner product in} $\BB{R}^3$. 
We define $h(u,v)=H(u,v,{\bf w}_0)$. The function $h$ is the {\it height
function} on $g$ in the direction ${\bf w}_0$ and the family $H$ is a
$2$-parameter unfolding of $h$.

The following theorem is our main theorem.
\begin{thm}
 \label{thm:main1}
 Let $g:(\BB{R}^2,{\bf 0})\to(\BB{R}^3,{\bf 0})$ be given in the normal
 form of Whitney umbrella, \colR{and let $h$ be}
 %Assume that $h$
 the height function on $g$ in the direction ${\bf
 w}_0=\pm\wt {\n}(0,\theta_0)$, where $\theta_0\in(-\pi/2,\pi/2]$.
 Then $h$ has a singularity at $(0,0)$. Moreover, we have the following
 assertions.
 \begin{enumerate}
  \vspace{-6pt}\setlength{\parskip}{0cm}\setlength{\itemsep}{0cm} 
  \item The height function $h$ has an $A_1^+$
        $($resp. $A_1^-)$ singularity at $(0,0)$ if and only if
        $(0,\theta_0)$ is an elliptic $($resp. hyperbolic$)$ point over
        Whitney umbrella. In this case, $H$ is an
        $\mathcal{R}^+$-versal unfolding of $h$.
  \item The height function $h$ has an $A_2$ singularity at $(0,0)$ if
        and only if $(0,\theta_0)$ is a parabolic point and not a ridge
        point relative to $\tv{1}$ over Whitney umbrella. In this case,
        $H$ is an $\mathcal{R}^+$-versal unfolding of $h$.
  \item The height function $h$ has an $A_3^+$
        $($resp. $A_3^-)$ singularity at $(0,0)$ if 
        and only if $(0,\theta_0)$ is a parabolic point and first order
        ridge point relative to $\tv{1}$ over Whitney umbrella, and
        ${\tv{1}}^2\tk{1}(0,\theta_0)>0$ $($resp. $<0)$. In this
        case, $H$ is an $\mathcal{R}^+$-versal unfolding
        of $h$ if and only if $g$ is an elliptic Whitney umbrella.
  \item The height function $h$ has an $A_k$ singularity $(k\geq 4)$ at
        $(0,0)$ if and only if $(0,\theta_0)$ is a parabolic point and
        second or higher order ridge point relative to $\tv{1}$ over Whitney
        umbrella.
  \item The height function $h$ does not have %a $D_4$ or worse
        %singularity
        \textR{a singularity of type $D_4$ or worse $($that is, the Hessian matrix of $h$ never be of
        rank $0)$} at $(0,0)$.
 \end{enumerate}
\end{thm}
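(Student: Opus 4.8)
The plan is to expand $h$ explicitly in the normal-form coordinates and read off each assertion by matching the expansion against the formulas of Section~2. Write $c_0=\cos\theta_0$, $s_0=\sin\theta_0$, so that $\mathbf w_0=\frac{\pm1}{\ma(\theta_0)}\bigl(0,\,-(a_{11}c_0+a_{02}s_0),\,c_0\bigr)$ and $h=\frac{\pm1}{\ma(\theta_0)}\bigl(-(a_{11}c_0+a_{02}s_0)\,g_2+c_0\,g_3\bigr)$, where $g_2$ and $g_3$ are the second and third components of $g$ in \eqref{eq:normal_form}. The quadratic part of $g_2$ is $uv$ and that of $g_3$ is $A_2$, so the $2$-jet of $h$ is $\frac{\pm1}{\ma(\theta_0)}\bigl(\tfrac{a_{20}c_0}{2}u^2-a_{02}s_0\,uv+\tfrac{a_{02}c_0}{2}v^2\bigr)$ and the Hessian of $h$ at $(0,0)$ is $\frac{\pm1}{\ma(\theta_0)}\begin{pmatrix}a_{20}c_0&-a_{02}s_0\\-a_{02}s_0&a_{02}c_0\end{pmatrix}$. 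Since $a_{02}>0$, this matrix is never zero (that would force $c_0=s_0=0$), which proves~(5); and its determinant equals $\ma(\theta_0)^2\,r^2\wt K(0,\theta_0)$, so by \eqref{eq:Gaussian} the singularity is non-degenerate exactly at non-parabolic points, of type $A_1^+$ when $(0,\theta_0)$ is elliptic and $A_1^-$ when it is hyperbolic. Versality is then automatic: for a Morse germ $\langle h_u,h_v\rangle_{\mathcal E_2}=\mathcal M_2$, so the criterion of Section~3 reduces to $\mathcal E_2=\mathcal M_2+\langle1\rangle_{\BB R}$. This proves~(1).

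Next I would treat a parabolic point $(0,\theta_0)$, where the Hessian has rank exactly~$1$; since $\theta_0=\pi/2$ is always hyperbolic we have $c_0\neq0$, and the kernel of the Hessian is then spanned by $(c_0,s_0)$, which is the direction of $\tv{1}$ at $(0,\theta_0)$. By the splitting lemma, $h$ is right-equivalent to $\varepsilon u^2+\phi(v)$ with $\varepsilon=\pm1$ and $\phi(v)=\alpha_3v^3+\alpha_4v^4+\cdots$, so $h$ is of type $A_2$ if $\alpha_3\neq0$, of type $A_3^+$ or $A_3^-$ according as $\varepsilon\alpha_4>0$ or $\varepsilon\alpha_4<0$ when $\alpha_3=0\neq\alpha_4$, and of type $A_k$ with $k\geq4$ if $\alpha_3=\alpha_4=0$; a $D_4$-or-worse germ is excluded by~(5). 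The splitting reduction leaves the pure $v^3$-coefficient unchanged, so $\alpha_3$ is a nonzero multiple of the value at $(c_0,s_0)$ of the cubic part of $h$, namely of $6c_0\ta{3}(\theta_0)-b_3\ta{2v}(\theta_0)s_0^3$, which vanishes precisely when $\tv{1}\tk{1}(0,\theta_0)=0$; hence $h$ is of type $A_2$ iff $(0,\theta_0)$ is parabolic and not a ridge point relative to $\tv{1}$, which is~(2). For $\alpha_4$ I would carry out the Morse reduction $u\mapsto u-u_*(v)$, with $u_*(v)$ the critical point of $u\mapsto h(u,v)$ near $0$, to order~$4$; since we are now in the case $\alpha_3=0$, i.e.\ $\tv{1}\tk{1}(0,\theta_0)=0$, the formula for $\tv{1}^2\tk{1}$ in Section~2 reduces to $\tv{1}^2\tk{1}(0,\theta_0)=\Gamma(\theta_0)\sec^5\theta_0/(a_{02}\ma(\theta_0))$, and the computation identifies $\varepsilon\alpha_4$ as a nonzero multiple of $\Gamma(\theta_0)$ having the same sign as $\Gamma(\theta_0)\cos\theta_0$. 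Thus $\alpha_4=0\iff\tv{1}^2\tk{1}(0,\theta_0)=0\iff(0,\theta_0)$ is a second- or higher-order ridge point relative to $\tv{1}$, and $\varepsilon\alpha_4$ has the same sign as $\tv{1}^2\tk{1}(0,\theta_0)$; this yields~(3) and~(4) apart from the versality clause in~(3).

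For versality I would choose coordinates $(x_1,x_2)$ on $S^2$ centred at $\mathbf w_0$ dual to the basis $(1,0,0)$, $(0,\,c_0,\,a_{11}c_0+a_{02}s_0)$ of $\mathbf w_0^{\perp}$, so that the restrictions of $\partial H/\partial x_1$ and $\partial H/\partial x_2$ to $\BB R^2\times\{\mathbf w_0\}$ are $u$ and $c_0\,g_2+(a_{11}c_0+a_{02}s_0)\,g_3\in\mathcal M_2^{2}$, respectively. In the $A_2$ case the quotient $\mathcal E_2/\bigl(\langle h_u,h_v\rangle_{\mathcal E_2}+\langle1\rangle_{\BB R}+\mathcal M_2^{4}\bigr)$ is one-dimensional, spanned by the class of the split variable $v$, and $u$ has nonzero class there because its linear part in the split coordinates involves $v$ with a nonzero coefficient (a nonzero multiple of $c_0$); hence $H$ is an $\mathcal R^+$-versal unfolding of $h$. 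In the $A_3$ case the quotient $\mathcal E_2/\bigl(\langle h_u,h_v\rangle_{\mathcal E_2}+\langle1\rangle_{\BB R}+\mathcal M_2^{5}\bigr)$ is two-dimensional, and versality amounts to the linear independence in it of the classes of $u$ and of $c_0\,g_2+(a_{11}c_0+a_{02}s_0)\,g_3$; expressing both in the split coordinates and using the parabolicity relation $a_{20}c_0^2=a_{02}s_0^2$, the relevant $2\times2$ determinant simplifies to a nonzero multiple of $c_0s_0\,\ma(\theta_0)^2$. Since parabolicity forces $a_{20}\geq0$ with $s_0=0\iff a_{20}=0$, this determinant is nonzero iff $a_{20}>0$, i.e.\ iff $g$ is an elliptic Whitney umbrella. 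Finally, an $A_k$ germ with $k\geq4$ has right-codimension at least $3>2$, so no $2$-parameter unfolding of it can be $\mathcal R^+$-versal, and nothing further is claimed in~(4).

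The main obstacle is the order-$4$ computation in the second paragraph: one must expand $h$ through order~$4$, perform the Morse reduction while tracking how the cubic terms feed back into the quartic coefficient $\alpha_4$, and then recognise the resulting polynomial in the $a_{ij}$ and $b_i$ as having the sign of $\varepsilon\,\tv{1}^2\tk{1}(0,\theta_0)$ via the somewhat lengthy expression for $\tv{1}^2\tk{1}$ recalled in Section~2 — pinning down the sign that distinguishes $A_3^+$ from $A_3^-$ being the delicate point. The $A_3$ versality determinant is of the same nature but much shorter, and parts~(1), (2) and~(5) follow quickly once the $2$-jet and the identification of $\alpha_3$ with $6c_0\ta{3}(\theta_0)-b_3\ta{2v}(\theta_0)s_0^3$ are in hand.
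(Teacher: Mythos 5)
Your proposal is correct and follows essentially the same route as the paper: compute the $2$-jet to get the Hessian determinant $r^2\ma(\theta_0)^2\wt K(0,\theta_0)$ (settling (1) and (5)), reduce along the degenerate direction and identify the cubic and quartic coefficients with $\tv{1}\tk{1}(0,\theta_0)$ and $\tv{1}^2\tk{1}(0,\theta_0)$ (settling (2)--(4)), and test infinitesimal $\mathcal R^+$-versality on the classes of $H_x|_0=u$ and $H_y|_0$, whose obstruction reduces via $a_{20}\cos^2\theta_0=a_{02}\sin^2\theta_0$ to the nonvanishing of $s_0\ma(\theta_0)^2$, i.e.\ to $a_{20}>0$ --- exactly the paper's $\beta_{20}$. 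The only differences are presentational (splitting lemma versus the explicit substitution $v\mapsto v-bu-cv^2$, and a $2\times2$ determinant in the $2$-dimensional local algebra versus the paper's full coefficient table), and the computations you defer are the same ones the paper dispatches as ``straightforward calculations.''
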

\begin{proof}
 We now remark that
 \[
  {\bf w}_0=\pm\wt{\n}(0,\theta_0)
 =\pm\left(0,\frac{-a_{11}\cos\theta_0-a_{02}\sin\theta_0}{\ma(\theta_0)},\frac{\cos\theta_0}{\ma(\theta_0)}\right).
 \]
 We have $g_u(0,0)=(1,0,0)$ and $g_v(0,0)=(0,0,0)$, and thus
 $H_u(0,0,{\bf w})=H_v(0,0,{\bf w})=0$ if and only if ${\bf w}$ is on
 the $yz$-plane% , that is, ${\bf w}=\pm\wt{\n}(0,\theta)$, where
%  $\theta\in(-\pi/2,\pi/2]$
 .
 Hence, $h$ has a singularity at $(0,0)$.

 Next, we prove the criteria for the type of singularities of $h$.
 Calculations show that
 \[
 h_{uu}(0,0)=\pm\frac{a_{20}\cos\theta_0}{\ma(\theta_0)},\quad
 h_{uv}(0,0)=\mp\frac{a_{02}\sin\theta_0}{\ma(\theta_0)},\quad
 h_{vv}(0,0)=\pm\frac{a_{02}\cos\theta_0}{\ma(\theta_0)},
 \]
 so that the determinant of the Hessian matrix of $h$ at $(0,0)$ is given by
 \[
  \frac{a_{02}(a_{20}\cos^2\theta_0-a_{02}\sin^2\theta_0)}{{\ma(\theta_0)}^2} 
  =r^2\ma(\theta_0)^2\wt{K}(0,\theta_0).
 \]
 From this, $h$ has an $A_1^+$ (resp. $A_1^-$) singularity at
 $(0,0)$ if and only if $(0,\theta_0)$ is an elliptic
 (resp. hyperbolic) point over Whitney umbrella.
 Moreover, $h$ has %an
 \textR{a singularity of type of $A_2$ or worse singularity at $(0,0)$} if and
 only if $(0,\theta_0)$ is a parabolic point over Whitney umbrella.
 When $(0,\theta_0)$ is a parabolic point over Whitney umbrella we have
 $\cos\theta_0\ne0$, that is, $h_{vv}(0,0)\ne 0$.
 Thus, $h$ does not have a %$D_4$ or worse singularity at $(0,0)$.
 \textR{singularity of type $D_4$ or worse at $(0,0)$.}

 We assume that $h$ has an $A_k$ $(k\geq 2)$ singularity at $(0,0)$.
 Now we regard $S^2$ as the set of unit vectors in $\BB{R}^3$ and we
 write $(x,y,z)={\bf w}\in S^2$.
 For any ${\bf w}\in S^2$, we have $x^2+y^2+z^2=1$.
 We may assume that $z\ne0$.
 We have $z=\pm\sqrt{1-x^2-y^2}$.
 We set
 \[
 b=\frac{h_{11}}{h_{02}},\quad
 c=\frac{h_{21}(h_{02})^2-2h_{12}h_{11}h_{02}+h_{03}(h_{11})^2}{2(h_{02})^3},
 \]
 where
 \[
  h_{ij}=\frac{\partial^{i+j}h}{\partial u^i\partial v^j}(0,0).
 \]
 Replacing by $v$ by $v-bu-cv^2$ and writing down $H$ as
 \begin{equation}
  \label{eq:monge_height_unfolding}
   H=u x+\sum_{i+j=2}^4\frac1{i!j!}c_{ij}(x,y)u^iv^j+O(u,v)^5,
 \end{equation}
 we have
 \begin{equation}
  \label{eq:monge_height_function}
  h=\frac12c_{02}^0v^2
   +\frac16(c_{30}^0u^3+3c_{12}^0u v^2+c_{03}^0v^3)
   +\sum_{i+j=4}\frac1{i!j!}c^0_{ij}(x,y)u^iv^j+O(u,v)^5,
 \end{equation}
 where
 \[
  c_{ij}(x,y)=\frac{\partial^{i+j}H}{\partial u^i \partial v^j}(0,0,x,y)
 \]
 and $c^0_{ij}=c_{ij}(x_0,y_0)$.
 Note that $c^0_{02}=h_{vv}(0,0)\ne0$.
 The expansion \eqref{eq:monge_height_function} implies that
 % $h$ has an $A_2$ singularity at $(0,0)$ if and only if
 % $\hat{c}_{20}^0=0$ and $\hat{c}_{30}^0\ne0$.
 $h$ has $A_k$ singularity at $(0,0)$ if and only if the following
 conditions holds:
\begin{itemize}
%  \item $A_2:\hat{c}_{30}^0\ne0$;
%  \item $A_3^+\,(\text{resp.\,}A_3^-):\hat{c}_{30}^0=0,\,\hat{c}_{02}^0\hat{c}_{40}^0>0\,
%        (\text{resp.\,}<0)$;
%  \item $A_{\geq4}:\hat{c}_{30}^0=\hat{c}_{40}^0=0$.
 \item $A_2:c_{30}^0\ne0$;
 \item $A_3^+\,(\text{resp.\,}A_3^-):c_{30}^0=0,\,c_{02}^0c_{40}^0>0\,
       (\text{resp.\,}<0)$;
 \item $A_{\geq4}:c_{30}^0=c_{40}^0=0$.
\end{itemize} 
 %From \eqref{eq:ridge}--\eqref{eq:gamma},
 Straightforward calculations show that
 %$\hat{c}_{30}^0=0$
 $c_{30}^0=0$ is equivalent to $\tv{1}\tk{1}(0,\theta_0)=0$, and that
 if %$\hat{c}_{30}^0=0$
 $c_{30}^0=0$ then
 %$\hat{c}_{02}^0\hat{c}_{40}^0=a_{02}\ma(\theta_0){\tv{1}}^2\tk{1}(0,\theta_0)\cos\theta_0$.
 $c_{02}^0c_{40}^0=a_{02}\ma(\theta_0){\tv{1}}^2\tk{1}(0,\theta_0)\cos\theta_0$.
 This conclude the proof of the criteria the type of singularities of $h$.

 Next, we proceed to check the versality of $H$.
 We skip the proofs of $(1)$ and $(2)$, since the proofs are similar to
 that of $(3)$. 
 We assume that $h$ has an $A_3$ singularity at $(0,0)$, and we
 consider the expansions \eqref{eq:monge_height_unfolding} and
 \eqref{eq:monge_height_function}. 
% We remark that $A_3$ singularities are $\mathcal{R}$-4-determined.
 We %remark
 \colR{note} that $(0,\theta_0)$ is now a parabolic point over Whitney
 umbrella.
 Therefore, from \eqref{eq:Gaussian} we have $a_{20}\ge 0$.
 In order to show the $\mathcal{R}^+$-versality of $H$, we need to
 verify that the following equality holds:
 \begin{equation}
  \label{eq:RverA3}
  \mathcal E_2=
  \left< \pd{h}{u},\pd{h}{v}\right>_{\mathcal E_2}
  +
  \left<
  \left.\pd{H}{x}\right|_{\BB{R}^2\times \{{\bf w}_0\}},
  \left.\pd{H}{y}\right|_{\BB{R}^2\times \{{\bf w}_0\}}
  \right>_{\BB{R}}
  +\langle 1 \rangle_{\BB{R}}
  +\langle u,v\rangle_{\mathcal{E}_2}^5.
 \end{equation}
 The coefficients of $u^iv^j$ of functions appearing in
 \eqref{eq:RverA3} are given by the following table:
\[
 \begin{array}{@{\hs{8}}c@{\hs{8}}|
  @{\hs{8}}c@{\hs{8}}c@{\hs{8}}|
   @{\hs{8}}c@{\hs{8}}c@{\hs{8}}c@{\hs{8}}|
   @{\hs{8}}c@{\hs{8}}c@{\hs{8}}c@{\hs{8}}c@{\hs{8}}|
   @{\hs{8}}c@{\hs{8}}}\hline
   &&&&&&&&&&\\[-10pt]
   & u & v & u^2 & uv & v^2 & u^3 & u^2v & uv^2 & v^3 & u^4 \\[4pt]\hline
   &&&&&&&&&&\\[-10pt]
   H_x & \fbox{1} & 0 &
   \text{\footnotesize{$\alpha_{20}/2$}} & \alpha_{11} &
   %\frac{\alpha_{02}}2
   \text{\footnotesize{$\alpha_{02}/2$}} & %\frac{\alpha_{30}}6
   \text{\footnotesize{$\alpha_{30}/6$}} & %\frac{\alpha_{21}}2
   \text{\footnotesize{$\alpha_{21}/2$}} & %\frac{\alpha_{12}}2
   \text{\footnotesize{$\alpha_{12}/2$}} & %\frac{\alpha_{03}}6
   \text{\footnotesize{$\alpha_{03}/6$}} & %\frac{\alpha_{40}}{24}
   \text{\footnotesize{$\alpha_{40}/24$}} \\[6pt]%0 & 0 & 0 & 0 & 0 & 0 & 0 & 0 \\[4pt]
   H_y & 0 & 0 & %\frac{\alpha_{20}}2
   \text{\footnotesize{$\beta_{20}/2$}} & \beta_{11} &
   %\frac{\alpha_{02}}2
   \text{\footnotesize{$\beta_{02}/2$}} & %\frac{\alpha_{30}}6
   \text{\footnotesize{$\beta_{30}/6$}} & %\frac{\alpha_{21}}2
   \text{\footnotesize{$\beta_{21}/2$}} & %\frac{\alpha_{12}}2
   \text{\footnotesize{$\beta_{12}/2$}} & %\frac{\alpha_{03}}6
   \text{\footnotesize{$\beta_{03}/6$}} & %\frac{\alpha_{40}}{24}
   \text{\footnotesize{$\beta_{40}/24$}} \\[6pt]
   \hline 
   &&&&&&&&&&\\[-10pt]
   h_u & 0 & 0 &
   \text{\footnotesize{$c_{30}^0/2$}} & 0 & %c_{21}^0 & %\frac{c_{21}^0}2
   \text{\footnotesize{$c_{21}^0/2$}} & %\frac{c_{40}^0}6
   \fbox{\text{\footnotesize{$c_{40}^0/6$}}} & %\frac{c_{31}^0}2
   \text{\footnotesize{$c_{31}^0/2$}} & %\frac{c_{22}^0}2
   \text{\footnotesize{$c_{22}^0/2$}} & %\frac{c_{13}^0}6
   \text{\footnotesize{$c_{13}^0/6$}} & %\frac{c_{50}^0}{24}
   \text{\footnotesize{$c_{50}^0/24$}} \\[4pt] 
   h_v & 0 %c_{11}^0
   & \fbox{$c_{02}^0$} & 0 & %\frac{c_{21}^0}2
   %\text{\footnotesize{$c_{21}^0/2$}}
   c_{12}^0 & %\frac{c_{03}^0}2
   \text{\footnotesize{$c_{03}^0/2$}} & %\frac{c_{31}^0}6
   \text{\footnotesize{$c_{31}^0/6$}} & %\frac{c_{22}^0}2
   \text{\footnotesize{$c_{22}^0/2$}} & %\f6c{c_{13}^0}2
   \text{\footnotesize{$c_{13}^0/2$}} & %\frac{c_{04}^0}6
   \text{\footnotesize{$c_{04}^0/6$}} & %\frac{c_{41}^0}{24}
   \text{\footnotesize{$c_{41}^0/24$}} \\[6pt]
   \hline 
   &&&&&&&&&&\\[-10pt]
   u h_u & 0 & 0 & 0 & 0 %c_{20}^0 & c_{11}^0
   & 0 &
   \text{\footnotesize{$c_{30}^0/2$}} & 0 & %c_{21}^0 &
   \text{\footnotesize{$c_{12}^0/2$}} & 0 &
   \fbox{\text{\footnotesize{$c_{40}^0/6$}}}\\[4pt]
   u h_v & 0 & 0 & %c_{11}^0
   \colR{0} & \fbox{$c_{02}^0$} & 0 & 0 %\text{\footnotesize{$c_{21}^0/2$}}
   & c_{12}^0 &
   \text{\footnotesize{$c_{03}^0/2$}} & 0 &
   \text{\footnotesize{$c_{31}^0/6$}}\\[4pt]
   v h_v & 0 & 0 & 0 & 0 %c_{11}^0
   & \fbox{$c_{02}^0$} & 0 & 0 %\text{\footnotesize{$c_{21}^0/2$}}
   & c_{12}^0 &
   \text{\footnotesize{$c_{03}^0/2$}} & 0 \\[4pt]
   \hline 
   &&&&&&&&&&\\[-10pt]
   u^2 h_v & 0 & 0 & 0 & 0 & 0 & 0 %c_{11}^0
   & \fbox{$c_{02}^0$} & 0 & 0 & 0 %\text{\footnotesize{$c_{21}^0/2$}}
   \\[4pt]
   u v h_v & 0 & 0 & 0 & 0 & 0 & 0 & 0 & %c_{11}^0 &
   \fbox{$c_{02}^0$} & 0 & 0 \\[4pt]
   v^2 h_v & 0 & 0 & 0 & 0 & 0 & 0 & 0 & 0 %c_{11}^0
   & \fbox{$c_{02}^0$} & 0 \\[4pt]\hline
 \end{array}
 \]
 \[
  \begin{array}{@{\hs{8}}c@{\hs{8}}|
   @{\hs{8}}c@{\hs{8}}|
    @{\hs{8}}c@{\hs{8}}@{\hs{8}}c@{\hs{8}}@{\hs{8}}c@{\hs{8}}@{\hs{8}}c@{\hs{8}}@{\hs{8}}c@{\hs{8}}}\hline
   &&&&&&\\[-10pt]
    & u^iv^j\ (i+j\le3) &u^4&u^3v&u^2v^2&uv^3&v^4\\[4pt]\hline
    &&&&&&\\[-10pt]
    u^3h_v & 0 & 0 %c_{11}^0
    & \fbox{$c_{02}^0$} & 0 & 0 & 0 \\
   u^2vh_v & 0 & 0 & 0 %c_{11}^0
    & \fbox{$c_{02}^0$} & 0 & 0 \\
   uv^2h_v & 0 & 0 & 0 & 0 %c_{11}^0
    & \fbox{$c_{02}^0$} & 0 \\
   v^3h_v & 0 & 0 & 0 & 0 & 0 %c_{11}^0
    & \fbox{$c_{02}^0$} \\[6pt]\hline
  \end{array}
 \]
 Here, $\alpha_{ij}=(\partial c_{ij}/\partial x)(x_0,y_0)$,
 $\beta_{ij}=(\partial c_{ij}/\partial y)(x_0,y_0)$. %, and the boxed
% entries denote non-zero coefficients.
 \textR{We remark that the boxed entries are non-zero.}
%  Using suitable row operators, we reduce the second, third, and fifth
%  rows to as follows:
%  \[
%    \begin{array}{@{\hs{8}}c@{\hs{8}}|
%    @{\hs{8}}c@{\hs{8}}c@{\hs{8}}|
%     @{\hs{4}}c@{\hs{4}}c@{\hs{8}}c@{\hs{8}}|
%     @{\hs{4}}c@{\hs{4}}c@{\hs{8}}c@{\hs{8}}c@{\hs{8}}|
%     @{\hs{4}}c@{\hs{0}}@{\hs{4}}c@{\hs{4}}@{\hs{4}}c@{\hs{4}}@{\hs{4}}c@{\hs{4}}@{\hs{4}}c@{\hs{4}}}\hline
%    &&&&&&&&&&\\[-10pt]
%     & u & v & u^2 & uv & v^2 & u^3 & u^2v & uv^2 & v^3 & u^4 & u^3v &
%     u^2v^2 & uv^3 & v^4\\[4pt]\hline
%     &&&&&&&&&&\\[-10pt]
%     H_y & 0 & 0 & \fbox{\footnotesize{$\hat{\alpha}_{20}/2$}} & 0 & 0 & * & * & * & * & *
%     & * & * & * & *\\[4pt]
%     h_u & \hat{c}_{20}^0 & 0 & \text{\footnotesize{$\hat{c}_{30}^0/2$}}
%     & 0 & 0 & \fbox{\footnotesize{$\hat{c}_{40}^0/6$}} & 0 & 0 & 0
%     & * & * & * & * & *\\[4pt]
%     uh_u & 0 & 0 &\hat{c}_{20}^0 & 0 & 0 & \text{\footnotesize{$\hat{c}_{30}^0/2$}} & 0 &
%     0 & 0 & \fbox{\footnotesize{$\hat{c}_{40}^0/6$}} & 0 & 0 & 0 & 0 \\[6pt]
%     \hline
%    \end{array}
%  \]
%  Here,
%  \begin{align}
%   \label{eq:coefficients_of_H}
%   \begin{split}
%    \hat{\alpha}_{20}&=(a_{11}+a_{02}\tan\theta_0)(a_{20}+2a_{11}\tan\theta_0+a02\tan^2\theta_0)+2\tan\theta_0\\
% %   =\Gamma_3^*(\theta_0)\sec^3\theta_0,\\
%    &=-\frac{r^2\ma(\theta_0)^4\tv{2}\tk{1}(0,\theta_0)\sec^3\theta_0}{a_{02}},
% %   &=-r^2{a_{02}}^{-1}\ma(\theta_0)^4\tv{2}\tk{1}(0,\theta_0)\sec^3\theta_0,
%   \end{split}
%  \end{align}
% We have
 \colR{Calculations give}
 \begin{equation*}
  \label{eq:coefficients_of_H}
   %\alpha_{20}=
\textR{\beta_{20}=\frac{r^2\ma(\theta_0)^4\tv{2}\tk{1}(0,\theta_0)\sec^3\theta_0}{a_{02}}.}
 \end{equation*}
% and entry ``$*$'' denotes the coefficient that is not important.
% From \eqref{eq:coefficients_of_H},
 \colR{It follows that} the matrix presented by this table
 is full rank, that is, \eqref{eq:RverA3} holds if and only if
 %$\hat{\alpha}_{20}$ is non-zero, that is,
 $(0,\theta_0)$ is not a sub-parabolic point relative to $\tv{2}$ over
 Whitney umbrella. 
 It now follows from \eqref{eq:Gaussian} and \eqref{eq:sub-parabolic}
 that $(0,\theta_0)$ not being a sub-parabolic point over Whitney
 umbrella is equivalent to $a_{20}\ne0$.  
%  if $(0,\theta_0)$ is a parabolic point over Whitney umbrella then
%  the condition $a_{20}\ne0$ is equivalent to $(0,\theta_0)$ not being a
%  sub-parabolic point relative to $\tv{2}$ over Whitney umbrella.
% Furthermore, % Proposition~\ref{prop:parabolic_line} now shows that if
%  $(0,\theta_0)$ is a parabolic point over Whitney umbrella then $g$ is
%  an elliptic Whitney umbrella if and only if $a_{20}\ne0$, which
%  concludes the proof.
%  Proposition \ref{prop:type_of_Whitney_umbrella} and Corollary 
%  \ref{prop:number_of_parabolic_points} now imply that $a_{20}\ne0$ is
%  equivalent to $g$ being an elliptic Whitney umbrella.
%  Thus, we have completed the proof.
 Therefore, from Proposition~\ref{prop:type_of_Whitney_umbrella}, we
 have completed the proof of the necessary and sufficient condition for
 $H$ to be $\mathcal{R}^+$-versal unfolding of $h$.
%
%  \noindent \colR{(4)\quad
%  From the proof of the assertion of (3), the assertion of (4)
%  is immediately established.}
%
%  \noindent \colR{(5)\quad
%  We conclude from \eqref{eq:monge_height_function} that $h$ has a $D_4$
%  or worse singularity at $(0,0)$ if and only if
%  $c_{20}=c_{11}=c_{02}=0$.
%  Since $a_{02}\ne 0$, \eqref{eq:2jet} shows that $(c_{11},c_{02})\ne(0,0)$.}
\end{proof}
%
%  \begin{prp}
%   \colR{Under the same assumptions as in Theorem \ref{thm:height}. Then}
%   \begin{enumerate}\vspace{-6pt}\setlength{\parskip}{0cm}\setlength{\itemsep}{0cm}
%   \item If $g$ is a hyperbolic Whitney umbrella then the height
%         function $h$ can only have an $A_1\m$ singularity at $(0,0)$.
%   \item If $g$ is a parabolic Whitney umbrella then the height function
%         $h$ can have an $A_1\m$ or $A_2$ singularity at $(0,0)$.
%   \end{enumerate}
%  \end{prp}
% \begin{proof}
%  \quad(1) If $g$ is a hyperbolic Whitney umbrella then
%  $r^2\wt{K}(0,\theta)<0$ for all $\theta\in[-\pi/2,\pi/2]$, that is,
%  all points $(0,\theta)$ on the exceptional set $X$ are hyperbolic
%  points over Whitney umbrella.\\
%  (2) If $g$ is a parabolic Whitney umbrella then
%  $r^2\wt{K}(0,\theta)\leq0$ for all $\theta\in[-\pi/2,\pi/2]$ and
%  there exists only one parabolic point $(0,\theta)=(0,0)$, which is not
%  a ridge point relative to $\tv{1}$ over Whitney umbrella. 
% \end{proof}

\begin{rem}
 Theorem~\ref{thm:main1} implies that the height function
 on a hyperbolic Whitney umbrella can have an $A_1^-$ singularity, and
 it on a parabolic Whitney umbrella can have an $A_1^-$ or $A_2$
 singularity.
\end{rem}

% \begin{rem}
% \textC{From the definition of the ridge point, for regular surfaces the
%  following condition are equivalent:
%  \begin{itemize}
%   \item[(a)] the parabolic line and the ridge line have $n$-point
%              contact at a smooth point on these two curves,
%   \item[(b)] this point is a $n$-th order ridge point.
%  \end{itemize}
%  Hence \cite[Theorem 2.1]{BGT1995} implies that for regular surfaces
%  the height function has an $A_k$ singularity $(k\geq3)$ at a intersection
%  point of the parabolic and ridge lines which is a smooth point on these
%  two curves if and only if this point is a
%  $(k-2)$-th order ridge point.
%  For Whitney umbrellas, the above conditions (a) and (b) are also
%  equivalent, and thus such a necessary and sufficient conditions for
%  the height function can be expected to be hold.}
% \end{rem}

\section{Local intersections of a Whitney umbrella with hyperplanes}
 \label{sec:intersection}
 In this section, we study the local intersections of a Whitney umbrella
 with hyperplanes through the Whitney umbrella singularity.

 \colRRR{Let $g:(\BB{R}^2, {\bf 0})\to (\BB{R}^3, {\bf 0})$ be given in
 the normal form of Whitney umbrella, and let $(u_1,v_1)$ and
 $(u_2,v_2)$ be two points on the double point locus $\mathcal{D}$ with
 $g(u_1,v_1)=g(u_2,v_2)$.
 Then we have
 \begin{align}
  \label{eq:eq_double1}
  &u_1=u_2,\\
  \label{eq:eq_double2}
  &0=u_1 v_1-u_2 v_2-\frac{b_3}6({v_1}^3-{v_2}^3)-\frac{b_4}{24}({v_1}^4-{v_2}^4)+\cdots,\\
  \label{eq:eq_double3}
  &0=%a_{20}({u_1}^2-{u_2}^2)+2a_{11}(u_1v_1-u_2v_2)+a_{02}({v_1}^2-{v_2}^2)+\cdots.
  \sum_{i+j=2}^3\frac{a_{ij}}{i!j!}({u_1}^i {v_1}^j-{u_2}^i{v_2}^j)+\cdots.
 \end{align}
 From \eqref{eq:eq_double1} and \eqref{eq:eq_double2}, we have
 \begin{equation}
  \label{eq:eq_double4}
   u_1=-\frac{b_3}6({v_1}^2+v_1 v_2+{v_2}^2)-\frac{b_4}{24}({v_1}^3+{v_1}^2v_2+v_1{v_2}^2+{v_2}^3)+\cdots.
 \end{equation}
 From \eqref{eq:eq_double1}, \eqref{eq:eq_double3}, and
 \eqref{eq:eq_double4}, we have
 \begin{equation}
  \label{eq:eq_double5}
  v_2=-v_1+\frac{a_{11}b_3-3a_{03}}{3a_{02}}{v_1}^2+\cdots.
 \end{equation}
 It follows from \eqref{eq:eq_double4} and \eqref{eq:eq_double5} that
 the parameterization of the double point locus $\mathcal{D}$ in the
 domain is given by
 \begin{equation}
  \label{eq:double_point_locus}
   \gamma(v)=\left(-\frac{b_3}6v^2+\frac{a_{11}b_3-3a_{03}}{18a_{02}}v^3+O(v^4),\,v\right).
 \end{equation}}
   
%  \colCC{If $g:(\BB{R}^2, {\bf 0})\to (\BB{R}^3, {\bf 0})$ is given in
%  the normal form of Whitney umbrella, then the parameterization of the
%  double point locus $\mathcal{D}$ in the domain is given by 
%  \begin{equation}
%   \label{eq:double_point_locus}
%   \gamma(v)=(u(v),v),\quad
%   u(v)=-\frac{b_3}6v^2+O(v^3). 
%  \end{equation}}
\begin{lmm}
 \label{lem:double_point_locus}
 \colCC{Let $h:(\BB{R}^2,{\bf 0})\to \BB{R}$ be a height function on
 $g$.
 Then $h(\gamma(v))$ is flat \colRRRR{at $v=0$} \textR{$($that is, all derivatives of $h(\gamma(v))$ vanish at $v=0)$} or} %Then
 the degree of the first term of $h(\gamma(v))$ is even.
\end{lmm}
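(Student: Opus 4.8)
The plan is to exploit the two-to-one behaviour of $g$ along the double point locus $\mathcal{D}$: the image curve $v\mapsto g(\gamma(v))$ is traced out twice near $v=0$, so $h\circ\gamma$ is invariant under the involution of the parameter $v$ that interchanges the two branches of $\mathcal{D}$, and this symmetry forces the order of vanishing of $h\circ\gamma$ at $v=0$ to be even, unless it is flat.

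First I would make this branch-interchanging involution explicit from the computation preceding the lemma. For $v$ near $0$ write $\gamma(v)=(u(v),v)$ as in \eqref{eq:double_point_locus}. By \eqref{eq:eq_double1} and \eqref{eq:eq_double5}, the other point of $\mathcal{D}$ having the same image under $g$ as $\gamma(v)$ has second coordinate
\[
 \sigma(v):=-v+\frac{a_{11}b_3-3a_{03}}{3a_{02}}v^2+\cdots,\qquad\sigma(0)=0,\quad\sigma'(0)=-1;
\]
since that point lies on $\mathcal{D}$ and $\gamma(w)$ is the unique point of $\mathcal{D}$ with second coordinate $w$, it must equal $\gamma(\sigma(v))$. (Equivalently, $u\circ\sigma=u$, which follows because \eqref{eq:eq_double4} expresses $u_1$ symmetrically in $v_1$ and $v_2$.) Hence $g(\gamma(v))=g(\gamma(\sigma(v)))$, and writing $f(v):=h(\gamma(v))$ we obtain
\[
 f(v)=\langle g(\gamma(v)),{\bf w}_0\rangle=\langle g(\gamma(\sigma(v))),{\bf w}_0\rangle=f(\sigma(v))
\]
for all $v$ near $0$.

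It then remains to read off the conclusion from $f=f\circ\sigma$. If $f$ is flat at $v=0$, the first alternative in the lemma holds. Otherwise, using $f(0)=h({\bf 0})=0$, write $f(v)=av^n+O(v^{n+1})$ with $a\ne0$ and $n\ge1$; then
\[
 f(\sigma(v))=a\,\sigma(v)^n+O(\sigma(v)^{n+1})=a\bigl(-v+O(v^2)\bigr)^n+O(v^{n+1})=(-1)^n a\,v^n+O(v^{n+1}),
\]
so comparing coefficients of $v^n$ in $f(v)=f(\sigma(v))$ gives $(-1)^n=1$, i.e.\ $n$ is even. (Alternatively, since a $C^\infty$ involution with derivative $-1$ at its fixed point is conjugate to $v\mapsto-v$, in a suitable coordinate $f$ is an even function, hence either flat or with first nonzero term of even degree.) I expect the only genuine obstacle to be the identification of the branch-interchanging map with the map $\sigma$ of \eqref{eq:eq_double5}; but that identification is precisely what the derivation \eqref{eq:eq_double1}--\eqref{eq:eq_double5} supplies, and since only the finite jet of $\sigma$ enters the argument, no further regularity is needed. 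Everything after that identification is the leading-order comparison above.
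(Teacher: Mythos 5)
Your proposal is correct and follows essentially the same route as the paper: both rest on the relation $h(\gamma(v_1))=h(\gamma(v_2))$ for the two branch parameters $v_2=\sigma(v_1)=-v_1+O(v_1^2)$ coming from \eqref{eq:eq_double5}, and use $\sigma'(0)=-1$ to rule out an odd-order leading term. The paper phrases the last step as a sign-change contradiction while you compare leading Taylor coefficients, but the content is the same (your write-up just makes the identification of the partner point with $\gamma(\sigma(v))$ more explicit).
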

\begin{proof}
 %Assume that 
 \colCC{If} $(u_1,v_1)$ and $(u_2,v_2)$ are two points on $\mathcal{D}$
 with  $g(u_1,v_1)=g(u_2,v_2)$%. 
 \colCC{, then} % Then we have
%  \begin{equation}
%   \label{eq:a}
%   v_2=-v_1+O({v_1}^2). %,
%  \end{equation}
 %Hence,
%  \colC{and thus} $\mathcal{D}$ is given by $u=-b_3v^2/6+O(v^3)$.
%  The 2-jet of $g(\gamma(v))$ is $(-b_3v^2/6,\,0,\,a_{02}v^2/2)$ and this
%  shows that the tangent vector to the image of $\mathcal{D}$ under $g$
%  is given by \eqref{eq:tangent_to_D}.
%
% Since $g(u_1,v_1)=g(u_2,v_2)$,
 we have $h(\gamma(v_1))=h(\gamma(v_2))$.
 \colRRR{We assume that $h(\gamma(v))$ is not flat \colRRRR{at $v=0$}. 
 This means %$(d/dv)^m(h(\gamma(v)))\ne0$
 \colRRRR{
 \[
  \left.\frac{d^m}{dv^m}h(\gamma(v))\right|_{v=0}\ne0
 \]}
 for some $m>1$.
 If $m$ is odd, the sign of $h$ changes near $v=0$ and we have
 contradiction. }
% Therefore, %\eqref{eq:a}
% \colRRR{\eqref{eq:eq_double5}} implies the %last
% assertion.
 \end{proof}
 \begin{thm}
  \label{thm:main2}
  Let $g:(\BB{R}^2,{\bf 0})\to (\BB{R}^3,{\bf 0})$ be given in the
  normal form of Whitney umbrella, and let $\mathcal{P}_{{\bf w}_0}$
  denote the hyperplane through the origin with normal vector ${\bf
  w}_0\in S^2$.
  \colCC{Moreover, let $h:(\BB{R}^2,{\bf 0})\to\BB{R}$ be the height
  function on $g$ %is
  \textR{in the direction ${\bf w}_0$.}}
  Then the following hold:
  \begin{enumerate}
   \item Suppose that $\mathcal{P}_{{\bf w}_0}$ contains
         the tangent \colRR{line} to the image of the double point locus
         $\mathcal{D}$ under $g$ at the origin\colCC{, %.
         and suppose that $h(\gamma(v))$ is not flat \colRRRR{at $v=0$},
         where $\gamma$ is the parameterization of $\mathcal{D}$ in
         \eqref{eq:double_point_locus}.} 
         \begin{enumerate}
          \item Suppose that %the height function $h$ on $g$ in the
                %direction ${\bf w}_0$
                $h$ is non-singular at $(0,0)$.
                Then $h^{-1}(0)$ and $\mathcal{D}$, in the parameter
                space, have $2k$-point contact $(k\geq 2)$ at
                $(0,0)$. Moreover, the local intersection of $g$ with
                $\mathcal{P}_{{\bf w}_0}$ has %an $A_{2k}$
                %singularity at the origin as a plane curve in
                %$\BB{R}^3$.
                \textR{a singularity of type $A_{2k}$ in
                $\mathcal{P}_{{\bf w}_0}$ as \textR{a %plane
                curve in the plane $\mathcal{P}_{{\bf w}_0}$.}} 
          \item Suppose that $h$ %in ${\bf w}_0$
                is singular at $(0,0)$.
                Then one of the local components of $h^{-1}(0)$ which is
                not the $u$-axis and $\mathcal{D}$, in the parameter
                space, have $(2k+1)$-point contact $(k\geq 1)$ at
                $(0,0)$. Moreover, the local intersection of $g$ with
                $\mathcal{P}_{{\bf w}_0}$ has %a $D_{2k+3}$
                %singularity at the origin as a plane curve in
                %$\BB{R}^3$.
                \textR{a singularity of type $D_{2k+3}$ in
                $\mathcal{P}_{{\bf w}_0}$ as \textR{a %plane
                curve in the plane $\mathcal{P}_{{\bf w}_0}$.}}
         \end{enumerate}
   \item Suppose that $\mathcal{P}_{{\bf w}_0}$ does not contain
         the tangent \colRR{line} to image of $\mathcal{D}$ under $g$, and that $h$ %in
         %the direction ${\bf w}_0$
         has an $A_k^\pm$ singularity
         %$(k=0,1,2,3)$
         at $(0,0)$.
         Then the local intersection of $g$ with $\mathcal{P}_{{\bf
         w}_0}$ %has an $A_{k+2}^\pm$ singularity at the origin as a
         %plane curve in $\BB{R}^3$.
         \textR{has a singularity of type $A_{k+2}$ in
         $\mathcal{P}_{{\bf w}_0}$ as a curve in the plane
         $\mathcal{P}_{{\bf w}_0}$.}
  \end{enumerate}
 \end{thm}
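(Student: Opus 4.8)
The plan is to reduce everything to a careful analysis of the map germ obtained by restricting $g$ to the zero set $h^{-1}(0)$ in the parameter plane, and then comparing with the double point locus $\mathcal{D}$. First I would put $g$ in the normal form \eqref{eq:normal_form} and, as in the proof of Theorem~\ref{thm:main1}, observe that since $g_u(0,0)=(1,0,0)$ and $g_v(0,0)=\mathbf 0$, the hyperplane $\mathcal{P}_{\mathbf w_0}$ automatically contains the $g_u$-direction; hence the tangent line to the $u$-axis under $g$ always lies in $\mathcal{P}_{\mathbf w_0}$. The tangent line to the image of $\mathcal{D}$ under $g$, by \eqref{eq:double_point_locus}, is spanned by $(dg\circ\gamma)'(0)$, which to leading order is $(0, \pm 1, 0)$-ish in the $v$-direction after accounting for $\gamma$'s quadratic leading term; the distinction in the theorem between case (1) and case (2) is exactly whether $\mathbf w_0$ is orthogonal to this tangent vector. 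So the first step is to translate the two hypotheses into algebraic conditions on $\mathbf w_0=(x_0,y_0,z_0)$.

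Next, for the local intersection of $g$ with $\mathcal{P}_{\mathbf w_0}$: the set $g^{-1}(\mathcal{P}_{\mathbf w_0})$ is precisely $h^{-1}(0)$, so the intersection curve in $\mathcal{P}_{\mathbf w_0}$ is the image $g(h^{-1}(0))$. The key structural point is that $h^{-1}(0)$ always contains the $u$-axis $\{v=0\}$ as a component (since $h(u,0)$ vanishes to high order --- indeed $g(u,0)=(u,0,0)$ lies in every $\mathcal{P}_{\mathbf w_0}$ of the allowed form because $\mathbf w_0$ is in the $yz$-plane when $h$ is singular, and more generally one checks $h(u,0)=O(u^2)$ with the relevant coefficient controlled by $x_0$). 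When $h$ is singular at $(0,0)$, the whole $u$-axis is forced into $h^{-1}(0)$, giving a line component of the plane curve, which is why one gets $D$-type rather than $A$-type singularities in case (1b). When $h$ is non-singular, $h^{-1}(0)$ is a smooth curve through $(0,0)$ transverse to the $u$-axis, parameterized by a germ $v\mapsto(\psi(v),v)$; composing with $g$ and using that $g$ folds this curve onto its image via the cross-cap identification, the order of vanishing of $g\circ(\psi,\cdot)$ — equivalently the contact of $(\psi(v),v)$ with $\mathcal{D}=\gamma$ — determines the $A_{2k}$ type. The quantitative heart is Lemma~\ref{lem:double_point_locus}: because $h\circ\gamma$ has even leading order (or is flat), the contact order $2k$ between $h^{-1}(0)$ and $\mathcal{D}$ is even, and one reads off the singularity type of the image curve from a standard normal form for a parameterized plane curve germ with a cross-cap-type self-tangency.

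For part (2), where $\mathbf w_0$ is \emph{not} orthogonal to the $\mathcal{D}$-tangent, I would argue that $h^{-1}(0)$ is then transverse to $\mathcal{D}$ at $(0,0)$ (they meet only at the origin, to lowest order), so the two preimages of each double point near $0$ that lie on $h^{-1}(0)$ are generically separated; the plane curve $g(h^{-1}(0))$ is then, up to $\mathcal A$-equivalence, the image of the $A_k$ zero set of $h$ under a map with a simple fold, and a direct computation with the normal form \eqref{eq:monge_height_function} — reusing the coefficients $c^0_{ij}$ from the proof of Theorem~\ref{thm:main1} — shows the image acquires exactly two extra in the Milnor number, giving type $A_{k+2}$. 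Concretely: if $h$ has an $A_k$ singularity then $h^{-1}(0)$ has a singularity whose branches are governed by $c^0_{30},c^0_{40},\dots$, and pushing forward by $g$ (which is a diffeomorphism off $\{v=0\}$ but folds across it) adds the fold contribution.

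The main obstacle I anticipate is the bookkeeping in case (1): identifying the precise singularity type ($A_{2k}$ versus $D_{2k+3}$) of the image plane curve requires a clean normal form for the germ $g$ restricted to a curve through the cross-cap point, and keeping track of which branch of $h^{-1}(0)$ is ``the $u$-axis'' versus the transverse branch while simultaneously controlling the contact with $\gamma$ using only the leading coefficients from \eqref{eq:double_point_locus} and \eqref{eq:monge_height_function}. I expect this to come down to showing that the restriction $g|_{h^{-1}(0)}$ is $\mathcal A$-equivalent to $t\mapsto(t^2, t^{2k+1})$ in case (1a) and to a germ with an extra transverse line in case (1b), and that the even parity forced by Lemma~\ref{lem:double_point_locus} is exactly what makes these normal forms $A_{2k}$ and $D_{2k+3}$ rather than something with no standard name; the rest is routine once the hypotheses are in the algebraic form established in the first step.
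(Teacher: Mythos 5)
Your overall strategy --- translating the two hypotheses into the algebraic conditions $3a_{02}z_0-b_3x_0=0$ versus $\ne 0$ on ${\bf w}_0=(x_0,y_0,z_0)$, studying $g|_{h^{-1}(0)}$, and using Lemma~\ref{lem:double_point_locus} to force even contact between $h^{-1}(0)$ and $\mathcal{D}$ in case (1a) --- is the paper's strategy, and your treatment of (1a) (straightening $\mathcal{D}$ to $\{u=0\}$ via the $\mathcal{A}$-equivalence with $(u,v)\mapsto(u,uv,v^2)$ and reading off the normal form $t\mapsto(t^2,t^{2k+1})$) is essentially what the paper does. However, your ``key structural point'' is false: $h^{-1}(0)$ does \emph{not} always contain the $u$-axis. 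In the normal form, $g(u,0)=(u,\,O(u^{k+1}),\,a_{20}u^2/2+a_{30}u^3/6+\cdots)$, so $h(u,0)=x_0u+z_0(a_{20}u^2/2+\cdots)+\cdots$, which vanishes identically only when ${\bf w}_0=(0,\pm1,0)$ --- i.e.\ exactly under the combined hypotheses of case (1b), where $x_0=0$ comes from the singularity of $h$ and then $z_0=0$ from $3a_{02}z_0-b_3x_0=0$. If the $u$-axis were a component whenever $h$ is singular, every $A_k$ height function would produce a line component and hence a $D$-type intersection, contradicting the $A_{k+2}$ conclusion of part (2). Relatedly, in part (2) the curves $h^{-1}(0)$ and $\mathcal{D}$ are never transverse when $h$ is non-singular: both are tangent to the $v$-axis at the origin, and the hypothesis only guarantees that the contact is exactly $2$-point.

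The second genuine gap is part (2) for $k\ge1$, which is the computational heart of the theorem and which you dispatch with ``the image acquires exactly two extra in the Milnor number'' from ``a fold contribution''. That is the statement to be proved, not an argument. The paper's proof rotates coordinates by a $T\in\mathrm{SO}(3)$ adapted to $\theta_0$ so that $T\circ g|_{h^{-1}(0)}$ lies in a coordinate plane, writes $h=\tfrac12c_{02}^0\hat v^2+Q\hat u^{k+1}+\cdots$ in adapted coordinates, parameterizes the branches of $h^{-1}(0)$ separately for $k$ even ($\hat u=\alpha t^2$, $\hat v=\beta t^{k+1}$) and $k$ odd ($\hat u=t$, $\hat v=\pm\sqrt{-2Q/c_{02}^0}\,t^{(k+1)/2}$), and then checks that in the resulting parameterized plane curve every term of the second component of order at most $k+2$ in $t$ has even order while the coefficient of $t^{k+3}$ is non-zero; only this verification yields $A_{k+2}$ rather than some other type. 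Nothing in your proposal addresses why no odd-order term of order less than $k+3$ survives, so the ``$+2$'' is unjustified as written.
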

 \begin{proof}%[Proof of Theorem \ref{thm:main2}]
%   \begin{equation}
%    h=x_0 u+\frac12(a_{20}z_0 u^2+2(y_0+a_{11}z_0)u v+a_{02}z_0 v^2)+\cdots.
%   \end{equation}
%   Assume that $g(u_1,v_1)=g(u_2,v_2)$, where $(u_1,v_1)\ne(u_2,v_2)$.
%   We have
%   \begin{align}
%    \begin{split}
%     &u_1=u_2,\\
%     &u_1v_1-u_2v_2+\frac{b_3}6({v_1}^3-{v_2}^3)+\cdots=0,\\
%     &\frac12(a_{20}({u_1}^2-{u_2}^2)+2a_{11}(u_1v_1-u_2v_2)+a_{02}({v_1}^2-{v_2}^2))+\cdots.
%    \end{split}
%   \end{align}
%   Moreover, the double point locus is given by
%   \begin{equation}
%    u=-\frac{b_3}6v^2+\cdots.
%   \end{equation}
  Now we regard $S^2$ as the set of unit vectors in $\BB{R}^3$ and we
  write ${\bf w}_0=(x_0,y_0,z_0)$.
  
  \noindent$(1)$\quad
  \colCC{From \eqref{eq:double_point_locus}, the tangent line to the
  the image of the double point locus $\mathcal{D}$, as a space curve,
  under $g$ at the origin is given by
  \begin{equation}
   \label{eq:tangent_to_D}
   \{(-b_3,0,3a_{02})t\,;t\in\BB{R}\}.
   \end{equation}
  }
  From assumption and %Lemma \ref{lem:double_point_locus}
  \colCC{\eqref{eq:tangent_to_D}}, we have
  \begin{equation}
   \label{eq:contain}
   3a_{02}z_0-b_3x_0=0.
  \end{equation}
  %We
  Consider the parameterization of the double point locus
  $\mathcal{D}$ given by \eqref{eq:double_point_locus}.

  %Assume that $h$ is non-singular at $(0,0)$.
  \colC{We start with the case (a).}
  We have
  \begin{equation}
   \label{eq:contact}
   h(\gamma(v))=\frac16(3a_{02}z_0-b_3x_0)v^2+O(v^3).
  \end{equation}
  From \eqref{eq:contain} and \eqref{eq:contact}, $h^{-1}(0)$ and
  $\mathcal{D}$ have more than %$3$-point
  \colRR{2-point} contact at $(0,0)$.
  \colRR{%The last assertion of
  Lemma~\ref{lem:double_point_locus} implies that
  when $h^{-1}(0)$ is non-singular at $(0,0)$, $h^{-1}(0)$ and $\mathcal{D}$ have
  even-point contact at $(0,0)$.}
  Hence, %we conclude from Lemma \ref{lem:double_point_locus}
  \colRR{it follows} that
  $h^{-1}(0)$ and $\mathcal{D}$ have $2k$-point contact ($k\geq 2$).
  \colC{The double point locus of the standard Whitney umbrella
  $f:(\BB{R}^2,{\bf 0})\to(\BB{R}^3,{\bf 0})$ given by $(u,v)\mapsto(u,u
  v,v^2)$ is parameterized by $(0,v)$.}
  Since $g$ is $\ma$-equivalent to %the map germ
  $f$, there is a local diffeomorphism $\phi$ such 
  that $\phi(\gamma(v))=(0,v)$.
  We assume that $\gamma(v)$ is written as the form
  \[
   \gamma(v)=(d_2v^2+\cdots+d_{2k-1}v^{2k-1}+d_{2k}v^{2k}+\cdots,v) \quad (d_i\in\BB{R}).
  \]
  Then $h^{-1}(0)$ is parameterized by 
  \[
   \hat{\gamma}(v)=(d_2v^2+\cdots+d_{2k-1}v^{2k-1}+p_{2k}v^{2k}+\cdots,v)\quad
   (p_i\in\BB{R},\ p_{2k}\ne d_{2k}).
  \]
  We have
  %$\phi(h^{-1}(0))=\{((p_{2k}-d_{2k})v^{2k}+\cdots,v)\,|\,v\in(\BB{R},0)\}$,
  \textR{$\phi(h^{-1}(0))=\{((p_{2k}-d_{2k})v^{2k}+\cdots,v)\,;v\in(\BB{R},0)\}$}
  and thus 
  \[
  f(\phi(h^{-1}(0)))=\{((p_{2k}-d_{2k})v^{2k}+\cdots,(p_{2k}-d_{2k})v^{2k+1}+\cdots,v^2)\,;v\in(\BB{R},0)\}. 
  \]
  This is locally diffeomorphic to $g(h^{-1}(0))$, which is the
  intersection of $g$ with $\mathcal{P}_{{\bf w}_0}$.
  By a suitable change of coordinates in $\BB{R}^3$, $(2k+1)$-jet of
  $f(\phi(\hat{\gamma}(v)))$ is transformed into
  $(0,(p_{2k}-d_{2k})v^{2k+1},v^2)$.
  Therefore, the local intersection of $g$ with $\mathcal{P}_{{\bf
  w}_0}$ has an $A_{2k}$ singularity at the origin.

%  Assume that $h$ has a singularity at $(0,0)$.
  \colC{We turn to the case (b).}
  Now we have $x_0=0$.
  It follow form \eqref{eq:contain} that $z_0=0$.
  Hence, ${\bf w}_0=(0,\pm1,0)$ and thus the height function $h$ is
  given by  
  \[
   h=v\left(u+\frac16b_3v^2+\frac1{24}b_4v^3+O(v^4)\right).
  \]
  From this and %\colRR{the last assertion of}
  Lemma \ref{lem:double_point_locus}, the local component 
  $u=-b_3/6v^2-b_4/24v^3+O(v^4)$ of $h^{-1}(0)$ and $\mathcal{D}$ has
  $(2k+1)$-point contact $(k\geq 1)$.
  The rest of the proof can be completed similarly as in the proof of
  (a). 
  
  \noindent $(2)$\quad
  From assumption and %Lemma \ref{lem:double_point_locus}
  \eqref{eq:tangent_to_D}, we have
  \begin{equation}
   \label{eq:not_contain}
   3a_{02}z_0-b_3x_0\ne0.
  \end{equation}
  Firstly, we prove the case \colRR{that} %$h$ has $A_0$ singularity at $(0,0)$,
  \colC{$k=0$}, that is, $h$ is non-singular at $(0,0)$.
  From \eqref{eq:contact} and \eqref{eq:not_contain}, it follows that
  $h^{-1}(0)$ and $\mathcal{D}$ have 2-point contact at $(0,0)$.
  Therefor, the proof of (a) of (1) implies that the local intersection of
  $g$ with $\mathcal{P}_{{\bf w}_0}$ has an $A_2$ singularity at the
  origin.
  
  Next, %assume that $h$ has an $A_k$ singularity $(k\ge 1)$ at $(0,0)$.
  \colC{we prove the case \colRR{that} $k\geq 1$.}
%   We now have
%   \[
%   {\bf
%   w}_0=\pm\wt{\n}(0,\theta_0)=\frac1{\ma(\theta_0)}(0,-a_{11}\cos\theta_0-a_{02}\sin\theta_0,\cos\theta_0)
%   \quad \left(-\frac\pi{2} < \theta_0 < \frac\pi{2}\right).
%   \]
%   We write $g(u,v)=(u,g_2(u,v),g_3(u,v))$.
%   The height function $h$ on $g$ in the direction ${\bf
%   w}_0$ is given by
%   \begin{equation}
%    \label{eq:height}
%    h=\dfrac{-(a_{11}\cos\theta_0+a_{02}\sin\theta_0)g_2+\cos\theta_0\cdot
%    g_3}{\ma(\theta_0)}.
%   \end{equation}
  Now we note that $\cos\theta_0\ne0$.
  We set a rotation $T:\BB{R}^3\to\BB{R}^3$ by
  \begin{equation*}
%    T=\left(\begin{array}{ccc}
%       1 & 0 & 0\\
%             0 & \dfrac{-a_{11}\cos\theta_0-a_{02}\sin\theta_0}{\ma(\theta_0)} &
%       -\dfrac{\cos\theta_0}{\ma(\theta_0)}\\[10pt]
%             0 & \dfrac{\cos\theta_0}{\ma(\theta_0)} &
%       \dfrac{-a_{11}\cos\theta_0-a_{02}\sin\theta_0}{\ma(\theta_0)} 
%      \end{array}\right).
   T=\left(
      \begin{array}{ccc}
      1 & 0 & 0 \\
       0 & \dfrac{\cos\theta_0}{\ma(\theta_0)} &
        \dfrac{a_{11}\cos\theta_0+a_{02}\sin\theta_0}{\ma(\theta_0)}\\[10pt]
       0 & \ \dfrac{-a_{11}\cos\theta_0-a_{02}\sin\theta_0}{\ma(\theta_0)}
       & \dfrac{\cos\theta_0}{\ma(\theta_0)}
     \end{array}\right).
  \end{equation*}
%   We have
%   \begin{equation*}
%    T\circ
%    g=\left(u,\dfrac{\cos\theta_0\cdot g_2+(a_{11}\cos\theta_0+a_{02}\sin\theta_0)g_3}{\ma(\theta_0)},
%      \dfrac{-(a_{11}\cos\theta_0+a_{02}\sin\theta_0)g_2+\cos\theta_0\cdot g_3}{\ma(\theta_0)}\right).
%   \end{equation*}
%  Using \eqref{eq:height}, we obtain
%   \begin{equation}
%    T\circ g|_{h^{-1}(0)}=\Bigl(u,\ma(\theta_0)\sec\theta_0\cdot
%    g_2,0\Bigr),
%   \end{equation}
  We obtain
  \begin{equation*}
   \label{eq:intersection_Ak}
    T\circ g|_{h^{-1}(0)}=\Bigl(u,\ma(\theta_0)\sec\theta_0\Bigl(uv+\dfrac{b_3}6v^3+O(u,v)^4\Bigr),0\Bigr),
   \end{equation*}
  which is a plane curve in $xy$-plane.
  From the proof of Theorem \ref{thm:main1}, $h$ can be expressed in
  \begin{equation*}
%   \label{eq:hh}
    h=%\pm\frac{a_{02}\cos\theta_0}{2\ma(\theta_0)}
    \frac12c_{02}^0\left(v-\tan\theta_0
    u+\sum_{i+j\geq 2}^k P_{ij}u^iv^j\right)^2+Qu^{k+1}+\cdots,
  \end{equation*}
  for some coefficients $P_{ij}$, $Q(\ne0)\in\BB{R}$, where $c_{02}^0$
  is as in \eqref{eq:monge_height_function}.
  We make changes of coordinates
  \begin{equation}
   \label{eq:changes_of_coordinates}
   \hat{v}=v-\tan\theta_0\,u+\sum_{i+j\geq 2}^k P_{ij}u^iv^j,\quad
    \hat{u}=u.
  \end{equation}
  In this new system, $h$ is expressed in
  \begin{equation}
   \label{eq:hh}
   h=%\pm\frac{a_{02}\cos\theta_0}{2\ma(\theta_0)}
    \frac12c_{02}^0\hat{v}^2+Q\hat{u}^{k+1}+%O(\hat{u},\hat{v})^{k+1}.
    \cdots.
  \end{equation}
  From \eqref{eq:changes_of_coordinates}, we have 
  $v=\tan\theta_0\,\hat{u}+\hat{v}+
  \Sigma_{i+j\geq2}^m\hat{P}_{ij}\hat{u}^i\hat{v}^j+\cdots$
  for some coefficients $\hat{P}_{ij}\in\BB{R}$, where $m=k/2$
  (resp. $m=(k+1)/2$) if $k$ is even (resp. odd).
  So we have
  \[
  T\circ
  g|_{h^{-1}(0)}=\Bigl(\hat{u},
  \ma(\theta_0)\sec\theta_0\Bigl(\tan\theta_0\,\hat{u}^2+\hat{u}\hat{v}
  +\sum_{i+j\geq
  3}^{m+1}\bar{P}_{ij}\hat{u}^i\hat{v}^j+\cdots\Bigr),0\Bigr)
  \quad (\bar{P}_{ij}\in\BB{R}).
  \]
  
%  Consider the case when $k$ is even.
  \colC{Suppose that $k$ is even.}
  It follows from \eqref{eq:hh} that $h^{-1}(0)$ is locally parameterized by
  $\hat{u}=\alpha t^2$, $\hat{v}=\beta t^{k+1}$,
  for some coefficients $\alpha$ and $\beta$, not both zero.
  Hence, we show that
  \[
  T\circ g|_{h^{-1}(0)}=
  \text{\small{$\left(\alpha t^2,\ma(\theta_0)\sec\theta_0
  \left(
  \underbrace{\alpha^2\tan\theta_0\,t^4
  +\cdots+\alpha^{m+1}\bar{P}_{m+1,0}\,t^{k+2}}_{\mathrm{even~order~terms}}+\alpha\beta 
  t^{k+3}+\cdots\right),0\right)$}},
  \]
  which has an $A_{k+2}$
  singularity at the origin as a plane curve.
  
%  Turn to the case when $k$ is odd.
  \colC{Suppose that $k$ is odd.}
  It follows form \eqref{eq:hh} that $h^{-1}(0)$ is locally parameterized
  by $\hat{u}=t$, $\hat{v}=\pm\sqrt{-2Q/c_{02}^0}\,t^{(k+1)/2}$.
  Hence, we show that
  \[
  T\circ g|_{h^{-1}(0)}=
  \left(t,\ma(\theta_0)\sec\theta_0\left(\tan\theta_0\,t^2+\cdots+\bar{P}_{m,0}t^{\frac{k+1}2}
  \pm\sqrt{-\frac{2Q}{c_{02}^0}}\,t^{\frac{k+3}2}+\cdots\right),0\right)
  \]
  which has an $A_{k+2}^\pm$ singularity at the origin as a plane curve.
 \end{proof}

 \colRRR{In the case $(1)(b)$, the local intersection contains a regular
 curve which is tangent to $x$-axis at the origin. 
 The curvature of the regular curve is considered in \cite{HHNUY2012}.}
 
 \begin{figure}[ht]
   \begin{minipage}[t]{0.33\textwidth}
   \centering
   \includegraphics[width=0.8\textwidth,angle=0,clip]{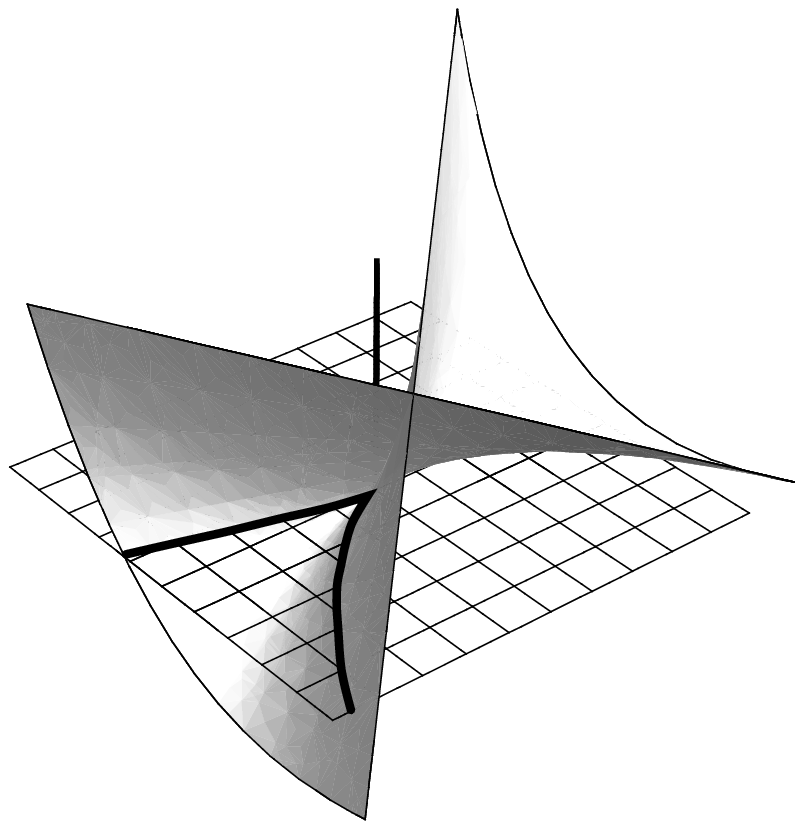}
  \end{minipage}\hfill
  \begin{minipage}[t]{0.33\textwidth}
   \centering
   \includegraphics[width=0.8\textwidth,angle=0,clip]{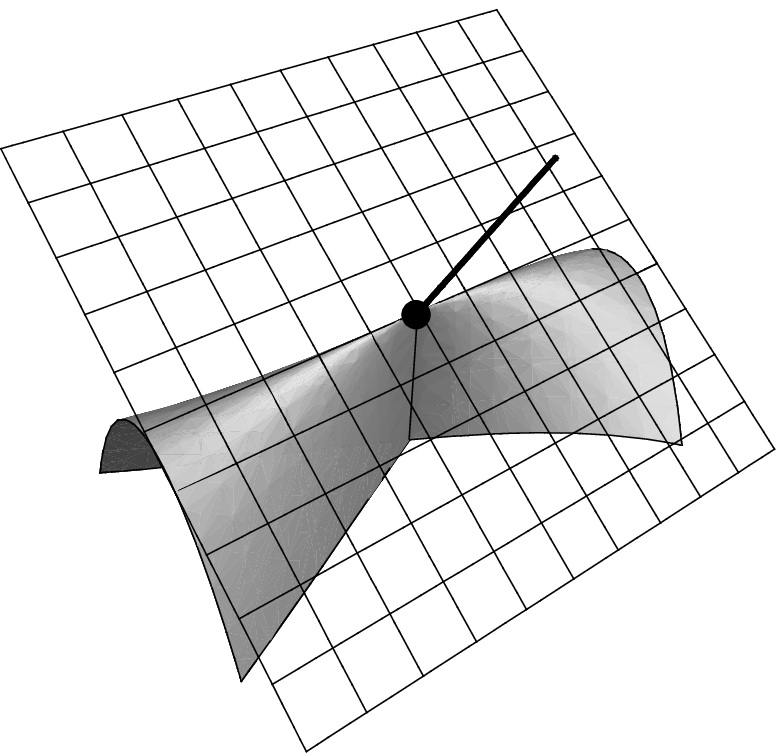}
  \end{minipage}\hfill
  \begin{minipage}[t]{0.33\textwidth}
   \centering
   \includegraphics[width=0.8\textwidth,angle=0,clip]{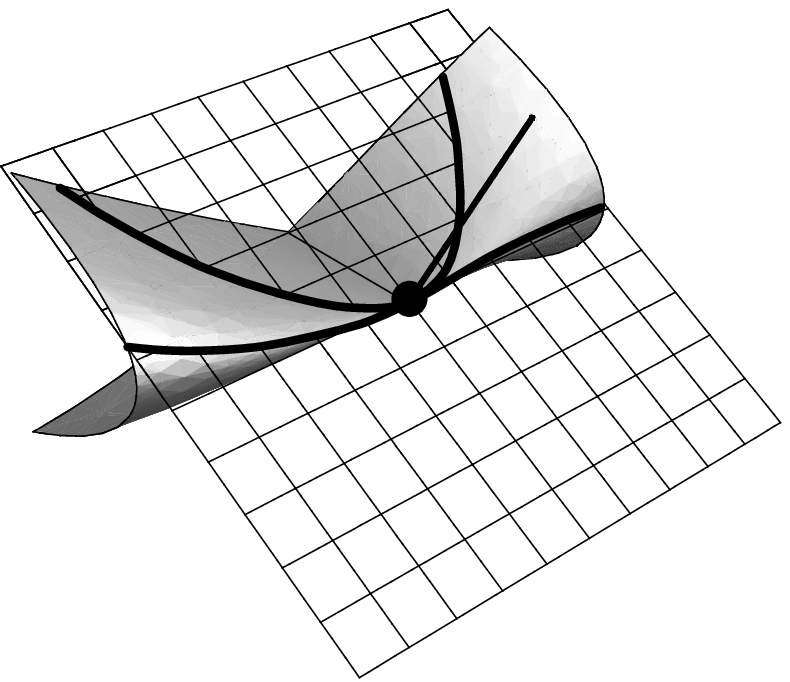}
  \end{minipage}\hfill
  \begin{minipage}[t]{0.33\textwidth}
   \centering
   \includegraphics[width=0.8\textwidth,angle=0,clip]{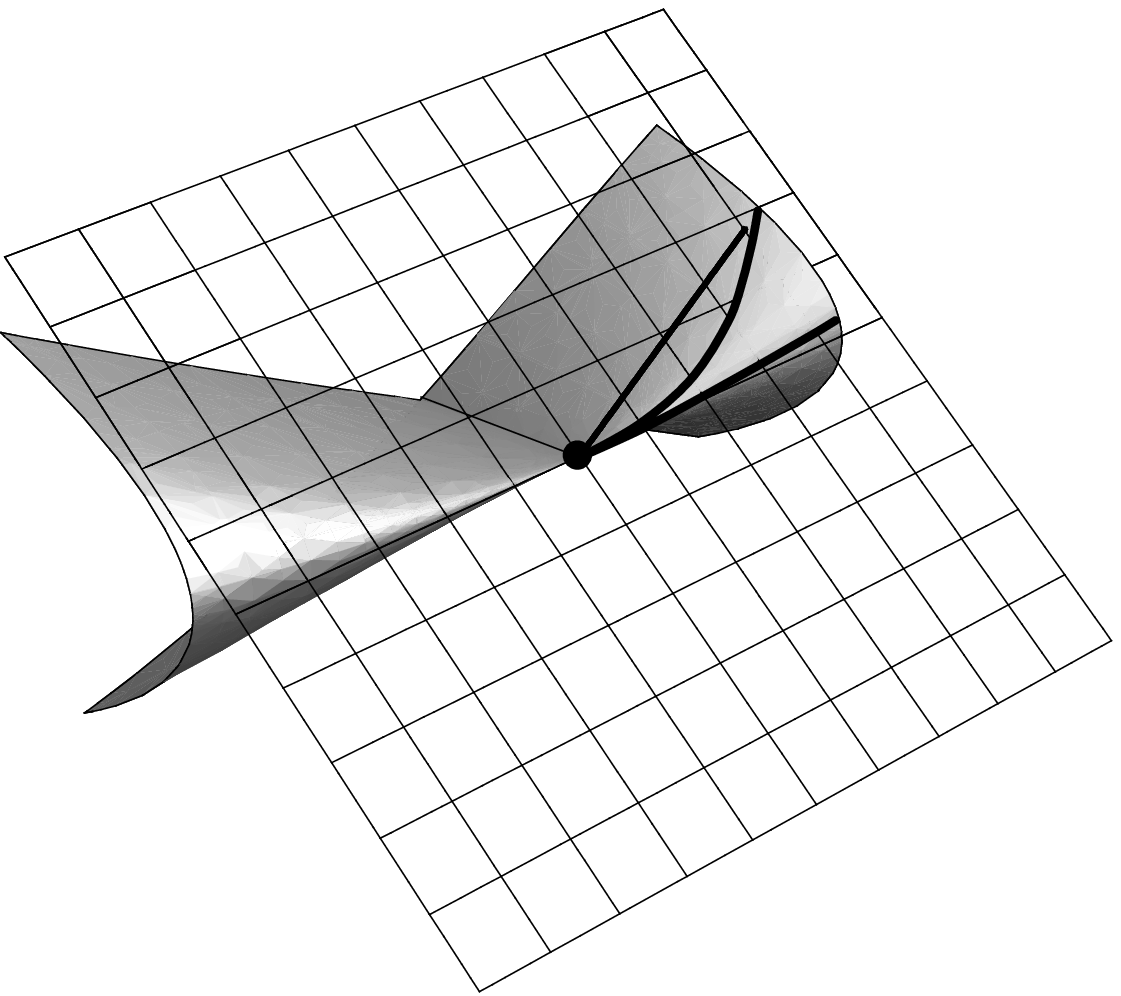}
  \end{minipage}\hfill
  \begin{minipage}[t]{0.33\textwidth}
   \centering
   \includegraphics[width=0.8\textwidth,angle=0,clip]{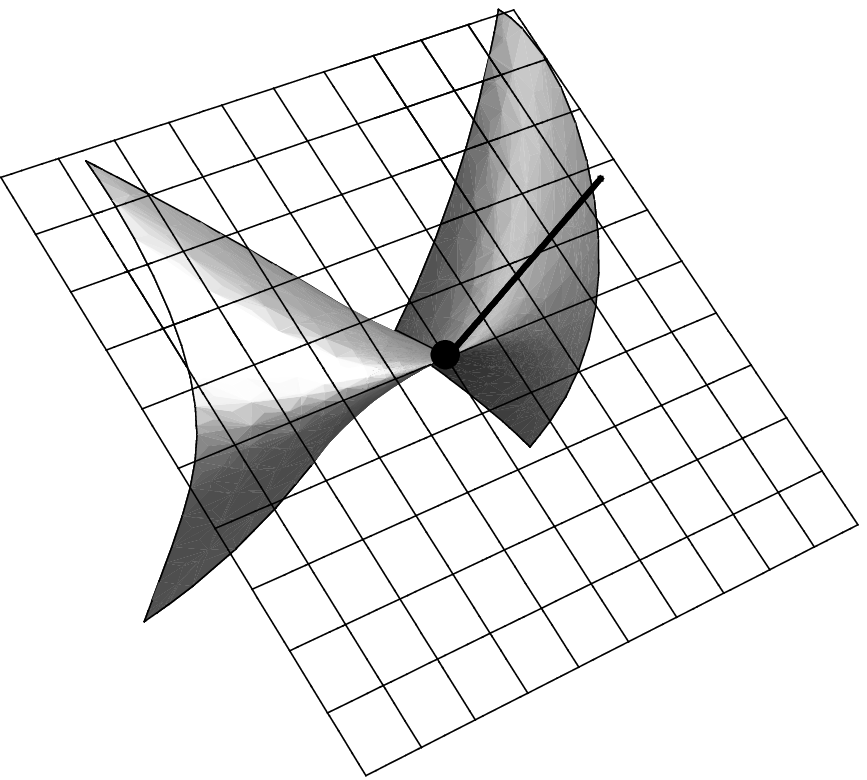}
  \end{minipage}\hfill
  \begin{minipage}[t]{0.33\textwidth}
   \centering
   \includegraphics[width=0.8\textwidth,angle=0,clip]{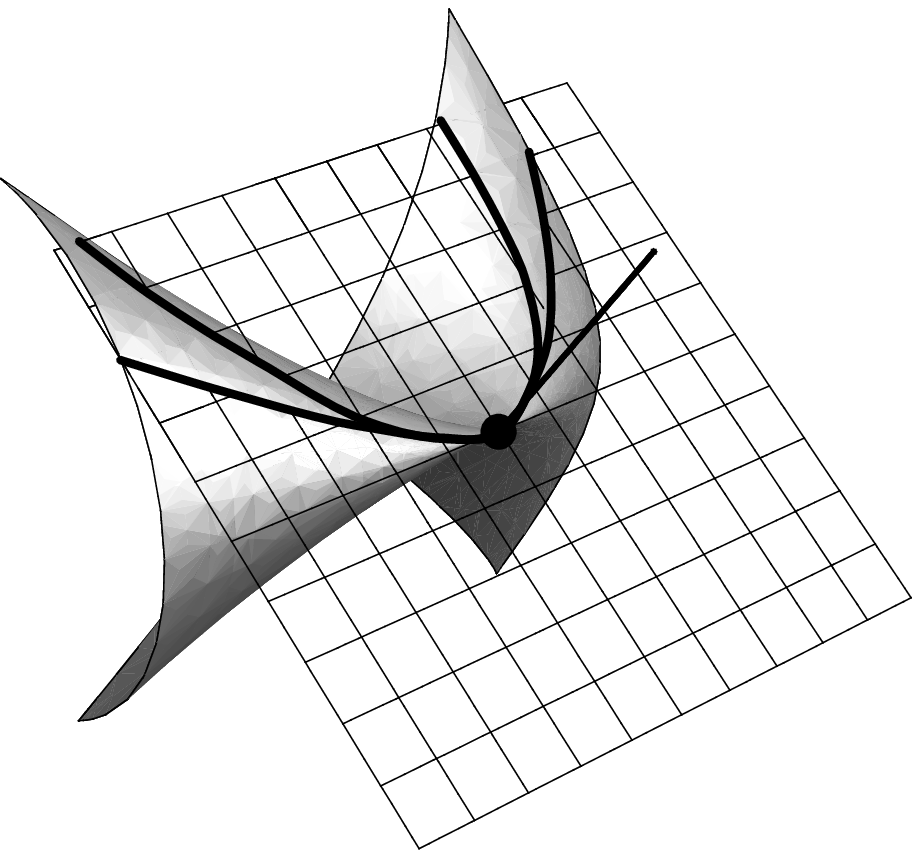}
  \end{minipage}\hfill
%   \caption{Contact with generic hyperplane at $A_0$ (top left), $A_1\p$
%   (top center), $A_1\m$ (top right), $A_2$ (bottom left), $A_3\p$
%   (bottom center), and $A_3\m$ (bottom right) points of $h$.
%   The bold lines show the local intersection of the Whitney umbrella
%   with generic hyper plane.}
    \textR{\caption{Intersection of the Whitney umbrella with the plane $\mathcal{P}_{{\bf w}_0}$ so that
  $h_{{\bf w}_0}$ has a singularity of type $A_0$ (top left), $A_1\p$
  (top center), $A_1\m$ (top right), $A_2$ (bottom left), $A_3\p$
  (bottom center), and $A_3\m$ (bottom right) when
  $\mathcal{P}_{{\bf w}_0}$ does not %include
  contain the tangent line to the double point locus on
  the Whitney umbrella.}}
 \end{figure}
 
%%%%%%%%%%%%%%%%%%%%%%%%%%%%%%%%%%%%%%%%%%%%%%%%%%

\end{document}